\newtheorem{thm}{Theorem}
\newtheorem{lem}[thm]{Lemma}
\newtheorem{prop}[thm]{Proposition}
\newtheorem{cor}[thm]{Corollary}
\newtheorem*{que}{Question}
\theoremstyle{remark}
\theoremstyle{definition}
\newtheorem{defi}[thm]{Definition}
\newtheorem{ex}[thm]{Example}
\newtheorem{rem}[thm]{Remark}
\DeclareMathOperator{\id}{id}
\DeclareMathOperator{\Ann}{Ann}
\DeclareMathOperator{\N}{\mathbb{N}}
\DeclareMathOperator{\Z}{\mathbb{Z}}
\title{Ore extensions of abelian
groups with operators}
\date{\today}
\author{Per B\"{a}ck}
\address[Per B\"{a}ck]{Division of Mathematics and Physics, M\"{a}lardalen  University, SE-721 23  V\"{a}ster{\aa}s, Sweden}
\email[corresponding author]{per.back@mdu.se}
\author{Patrik Lundstr\"{o}m}
\address[Patrik Lundstr\"{o}m]{Department of Engineering Science, University West, SE-461 86  Trollh\"{a}ttan, Sweden}
\email{patrik.lundstrom@hv.se}
\author{Johan \"{O}inert}
\address[Johan \"{O}inert]{Department of Mathematics and Natural Sciences, Blekinge Institute of Technology, SE-371 79 Karlskrona, Sweden}
\email{johan.oinert@bth.se}
\address{\vspace{-15.5pt}\newline\indent{\upshape and}\vspace{3pt}\newline\indent Department of Engineering, University of Sk\"{o}vde, SE-541 28 Sk\"{o}vde, Sweden}
\author{Johan Richter}
\address[Johan Richter]{Department of Mathematics and Natural Sciences, Blekinge Institute of Technology, SE-371 79 Karlskrona, Sweden}
\email{johan.richter@bth.se}
\subjclass[2020]{16S36, 16W22, 16W70, 17A99, 17D99, 20K27}
\keywords{Ore group extension, Ore module extension, Noetherian group, Noetherian module, Vandermonde's identity, Leibniz's identity, Hilbert's basis theorem}
\begin{document}

\begin{abstract}
Given a set $A$ and an abelian group $B$ with 
operators in $A$, in the sense of Krull and Noether, we introduce the Ore group extension
$B[x ; \sigma_B , \delta_B]$ as the 
additive group $B[x]$, with $A[x]$ as a set of operators. Here,
the action of $A[x]$ on $B[x]$ is defined 
by mimicking the 
multiplication used in the classical case where
$A$ and $B$ are the same ring.
We derive generalizations of Vandermonde's and
Leibniz's identities for this construction,
and they are then used to establish  
associativity criteria.
Additionally, we 
prove a version of Hilbert's basis theorem for this structure, under the assumption that the action of $A$ on 
$B$ is what we call weakly $s$-unital.
Finally, we apply these results to the case where 
$B$ is a left module over a ring $A$, and
specifically to the case where 
$A$ and $B$ coincide with a non-associative ring 
which is left distributive but not necessarily 
right distributive.
\end{abstract}

\maketitle

\section{Introduction}

In the landmark article \cite{Ore1933}, Ore 
introduced a variant of skew polynomial rings over
associative unital rings $R$, 
now known as \emph{Ore extensions}, 
denoted by $R[x ; \sigma , \delta]$. 
Since then, Ore extensions have become a fundamental 
construction in ring theory. 

Recall that the ring $R[x ; \sigma , \delta]$ is defined as the polynomial ring 
$R[x]$ as a left $R$-module, but with a modified 
associative multiplication, defined by the equality 
\begin{equation}\label{eq:commutation}
x r = \sigma(r)x + \delta(r)
\end{equation}
for $r \in R$. 
Here, $\sigma\colon R \to R$ is a ring endomorphism that 
preserves the identity, and $\delta\colon R \to R$ is a 
\emph{$\sigma$-derivation}, meaning that it is an additive map satisfying 
\begin{equation}
\delta(rs) = \sigma(r)\delta(s) + \delta(r)s
\end{equation}
for all $r,s \in R$. In the special case where 
$\sigma = \id_R$, the ring $R[x ; \id_R , \delta]$ is known as a 
\emph{differential polynomial ring}, 
and $\delta$ is called, simply, a \emph{derivation}.
If, additionally, $\delta = 0$, then $R[x ; \id_R , 0]$ is just the
standard polynomial ring $R[x]$.

Ore extensions appear
in the study of cyclic 
algebras, enveloping rings of solvable Lie algebras, 
and various types of graded rings such as group rings 
and crossed products; see e.g. 
\cite{Cohn1977,Jacobson1996,McConell2001,Rowen1988}. 
They also provide many important examples and counterexamples 
in ring theory \cite{Bergman1964,Cohn1961}. 
Moreover,
specific types of Ore extensions are used as
tools in different analytical contexts, such as in rings of
differential, pseudo-differential, and fractional differential
operators \cite{Goodearl1983}, and in $q$-Heisenberg 
algebras \cite{Hellstrom2000}.

Since Ore's original article \cite{Ore1933}, many 
generalizations of Ore extensions have been introduced.
These include modifications of the commutation rule 
\eqref{eq:commutation} and versions with multiple variables;
see e.g. 
\cite{Cojuhari2007,Cojuhari2012,Cojuhari2018,Malm1988,NOR2019,Smits1968}. Other authors have explored 
hom-associative, non-associative, non-unital, as well as ``flipped'' versions of Ore extensions; see e.g. \cite{AryapoorBack2025, BRS2018,LOR2023,NOR2018}.
Additional variants include the skew Laurent polynomial ring
$R[x^{\pm}; \sigma]$, the skew power series
ring $R[[x ; \sigma]]$, and the skew Laurent series 
ring $R((x; \sigma))$, as discussed in \cite{Goodearl2004}.

Various properties of these constructions, such as
conditions under which they are integral domains, principal
ideal domains, prime, or simple, have been
extensively studied by numerous authors; see e.g. the surveys
\cite{Cozzens1975,McConell2001} and the references therein.
One of the most fundamental results in this direction,
with many applications in ring theory and algebraic 
geometry, is the following:

\begin{thm}[Hilbert's basis theorem]\label{thm:Hilbertsbasistheorem}
Suppose that $R$ is an associative unital ring. 
Then $R[x]$ is left Noetherian if and only if 
$R$ is left Noetherian.
\end{thm}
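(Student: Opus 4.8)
The plan is to prove the two implications separately, the forward one---that left Noetherianity of $R$ implies that of $R[x]$---carrying essentially all of the content. For the reverse direction, suppose $R[x]$ is left Noetherian; then the evaluation homomorphism $R[x] \to R$ determined by $x \mapsto 0$ is surjective, so $R$ is isomorphic to the quotient of $R[x]$ by the two-sided ideal of polynomials with vanishing constant term. Since quotients of left Noetherian rings are again left Noetherian, $R$ is left Noetherian.

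For the forward direction, assume $R$ is left Noetherian and let $I$ be an arbitrary left ideal of $R[x]$; the goal is to show that $I$ is finitely generated. First I would introduce, for each integer $n \geq 0$, the subset $L_n \subseteq R$ consisting of $0$ together with all leading coefficients of polynomials in $I$ of degree exactly $n$. A routine check shows that each $L_n$ is a left ideal of $R$: closure under addition uses that a sum of two degree-$n$ elements of $I$ again lies in $I$, while closure under left multiplication uses that $I$ is a left ideal. Moreover $L_n \subseteq L_{n+1}$, since multiplying a degree-$n$ element of $I$ by $x$ yields a degree-$(n+1)$ element of $I$ with the same leading coefficient. This produces an ascending chain $L_0 \subseteq L_1 \subseteq \cdots$ of left ideals of $R$.

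The crucial step is to invoke the Noetherian hypothesis twice: the chain $(L_n)_{n \geq 0}$ stabilizes at some index $N$, so that $L_n = L_N$ for all $n \geq N$, and each of the ideals $L_0, \ldots, L_N$ is itself finitely generated. For each $n \leq N$ I would fix finitely many polynomials in $I$ of degree $n$ whose leading coefficients generate $L_n$, and gather all of them into a single finite set $S$. It then remains to show that $S$ generates $I$ as a left ideal, which I would do by induction on the degree of an element $f \in I$: one writes the leading coefficient of $f$ as a left $R$-combination of the chosen leading coefficients---taken from level $\deg f$ when $\deg f \leq N$, and from level $N$ after multiplication by a suitable power of $x$ when $\deg f > N$---and subtracts the corresponding element of the left ideal generated by $S$. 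This cancels the top-degree term, leaving an element of $I$ of strictly smaller degree to which the induction hypothesis applies.

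The hard part will be the bookkeeping in this final induction, where one must treat the two regimes $\deg f \leq N$ and $\deg f > N$ separately and verify that raising degrees by multiplying with powers of $x$ preserves leading coefficients. This last point is precisely where the identities $\sigma = \id$ and $\delta = 0$, under which $x$ acts as a genuine degree shift, keep the argument transparent; in the more general Ore setting studied in the remainder of the paper, it is exactly this step that demands additional care.
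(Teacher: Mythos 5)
Your proposal is correct; it is the classical leading-coefficient proof of Hilbert's basis theorem, and both implications are sound (the minor point that a sum of two degree-$n$ elements of $I$ may drop in degree is harmless since you include $0$ in each $L_n$). The comparison with the paper is slightly unusual, because the paper states Theorem~\ref{thm:Hilbertsbasistheorem} as classical background without proof and only recovers it at the very end as the unital special case of Corollary~\ref{cor:leftdistributive}, which rests on Theorem~\ref{thm:hilbertAB} and Theorem~\ref{thm:polynomialnoether}. The ``if'' direction of Theorem~\ref{thm:hilbertAB} is structurally the same induction on degree that you describe, but with different bookkeeping: instead of the ascending chain $L_0\subseteq L_1\subseteq\cdots$ of leading-coefficient ideals, the paper forms a single stable subgroup $Q$ of all (suitably $\sigma_B$-twisted) leading coefficients across all degrees, uses finitely many generators of $Q$ realized by polynomials $p_i'$ all normalized to one common degree $n$, and disposes of the low-degree part by intersecting $P$ with the Noetherian group $\sum_{i=0}^{n-1}Bx^i$ rather than by choosing generators level by level; this packaging is what survives the passage to non-unital, non-associative, merely left-distributive settings, where your degree-shift-by-$x$ step needs the weak $s$-unitality hypothesis and Lemma~\ref{lem:horrible}. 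For the ``only if'' direction the paper lifts an ascending chain $B_1\subsetneq B_2\subsetneq\cdots$ to $B_1[x]\subsetneq B_2[x]\subsetneq\cdots$, whereas you quotient by the ideal of polynomials with zero constant term; both are valid here, and your version additionally makes transparent why no extra hypothesis is needed in the unital case, while the paper's general statement must also extract weak $s$-unitality in that direction.
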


The ``if'' direction of Hilbert's basis theorem
has been shown to hold for many Ore extensions as well:

\begin{thm}\label{thm:HilbertsbasistheoremOre}
Suppose that $R$ is an associative, unital, left  
Noetherian ring with a ring automorphism $\sigma$ and a
$\sigma$-derivation $\delta$. Then $R[x;\sigma,\delta]$
is also left Noetherian. 
\end{thm}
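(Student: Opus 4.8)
The plan is to prove the result directly by showing that every left ideal of $S = R[x;\sigma,\delta]$ is finitely generated, adapting to the twisted setting the classical leading-coefficient argument behind Theorem~\ref{thm:Hilbertsbasistheorem}. Fix a left ideal $I \subseteq S$. For each $n \geq 0$ I would introduce the set $L_n$ consisting of $0$ together with all leading coefficients of elements of $I$ of degree exactly $n$. First I would check that each $L_n$ is a left ideal of $R$: closure under addition holds because the degree-$n$ coefficient of a sum of two degree-$n$ elements of $I$ is either their nonzero sum or else forces a drop in degree, and closure under left multiplication by $r \in R$ follows from $r(ax^n + \cdots) = (ra)x^n + \cdots$, since left multiplication by a scalar leaves the top degree undisturbed.

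The feature distinguishing this from the commutative case is that left multiplication by $x$ twists leading coefficients: from $xr = \sigma(r)x + \delta(r)$, together with the fact that $\delta$ strictly lowers degree, one obtains $x(ax^n + \cdots) = \sigma(a)x^{n+1} + \cdots$, so that $xI \subseteq I$ yields $\sigma(L_n) \subseteq L_{n+1}$. This is precisely where the automorphism hypothesis becomes indispensable: the $L_n$ themselves need not form an ascending chain, but the twisted sets $J_n = \sigma^{-n}(L_n)$ do. Indeed, $\sigma^{-1}$ is again a ring automorphism, so each $J_n$ is a left ideal, and applying $\sigma^{-(n+1)}$ to $\sigma(L_n) \subseteq L_{n+1}$ gives $J_n \subseteq J_{n+1}$. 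Since $R$ is left Noetherian, this chain stabilizes at some index $N$, whence $L_n = \sigma^{\,n-N}(L_N)$ for all $n \geq N$. I expect this inversion of $\sigma$ to be the main obstacle: in the commutative proof one cancels a leading term by right-multiplying a generator by a power of $x$, but here $fx^k$ need not lie in the left ideal $Sf$, whereas $x^k f$ does, at the cost of a $\sigma^k$-twist. Reconciling the left-ideal structure with this twist is exactly where noncommutativity obstructs a naive transcription of the usual argument.

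To finish, I would use that $R$ is left Noetherian to pick, for each $n$ with $0 \leq n \leq N$, finitely many elements of $I$ of degree $n$ whose leading coefficients generate $L_n$, denoting the chosen degree-$N$ elements by $f_{N,j}$, and let $I' \subseteq S$ be the left ideal they generate. Plainly $I' \subseteq I$, and I would prove $I = I'$ by induction on the degree $d$ of an arbitrary $f \in I$. When $d \leq N$, the leading coefficient of $f$ lies in $L_d$, hence is an $R$-combination of the chosen leading coefficients; subtracting the corresponding $R$-combination of generators produces an element of $I$ of strictly smaller degree, which lies in $I'$ by induction. When $d > N$, I would instead write the leading coefficient of $f$ using $L_d = \sigma^{\,d-N}(L_N)$ and cancel it by subtracting a combination of the elements $x^{\,d-N} f_{N,j}$, which lie in $I'$ and whose leading coefficients are the $\sigma^{\,d-N}$-images of the generators of $L_N$; once more the remainder has smaller degree and lies in $I'$ by induction. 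Hence $f \in I'$, so $I = I'$ is finitely generated and $S$ is left Noetherian.
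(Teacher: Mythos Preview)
Your argument is correct and is essentially the classical proof one finds in the reference the paper cites. Note, however, that the paper does not itself supply a proof of Theorem~\ref{thm:HilbertsbasistheoremOre}: it is quoted in the introduction as a known result, with the reader directed to \cite{Goodearl2004} for details, so there is no in-paper argument to compare against directly.

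The paper's own contribution in this direction is the generalization in Theorem~\ref{thm:hilbertAB}, whose proof is built on the same leading-coefficient idea but is organized a little differently. Rather than tracking the ascending chain $J_n=\sigma^{-n}(L_n)$ and invoking its stabilization, the paper works from the outset with the single set $Q$ of ``untwisted'' leading coefficients---in your notation, $Q=\bigcup_n \sigma^{-n}(L_n)$---shows this is a stable subgroup, and takes a finite generating set for it; the low-degree part is then handled by intersecting the ideal with the Noetherian truncation $\sum_{i<n} B x^i$ instead of separately generating each $L_0,\ldots,L_{N-1}$. Your version sticks closer to the textbook presentation, while the paper's formulation is tailored so that it goes through in the weaker setting of groups with operators, where one has only a weakly $s$-unital action rather than a unit.
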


Similar results hold for the rings
$R[x^{\pm}; \sigma]$, $R[[x ; \sigma]]$, and 
$R((x; \sigma))$. For detailed proofs of these results,
as well as of Theorem~\ref{thm:HilbertsbasistheoremOre},
see e.g. \cite{Goodearl2004}.

Another type of generalization of 
Hilbert's basis theorem
involves \emph{polynomial modules}
(see Theorem~\ref{thm:varadarajan}).
More specifically, consider the left 
$R[x]$-module $M[x]$, where 
$M$ is a left $R$-module, and the module structure is 
defined by the biadditive extension of the relations
$(r x^m)(a x^n) = (ra) x^{m+n}$ for 
$r \in R$, $a \in M$, and 
non-negative integers $m$ and $n$.
For a submodule $N$ of $M$, we define
$R^{-1}N \colonequals \{ x \in M \mid Rx \subseteq N \}$.
The module $M$ is said to have 
\emph{property (F)} if, for any submodule $N$ of $M$, 
we have $R^{-1} N = N$.
It can easily be shown that an $R$-module $M$
has property (F) if and only if it is  
\emph{$s$-unital}, meaning that for 
each $a \in M$
there exists $r \in R$ with $ra = a$.
In  \cite[Thm.~A]{Varadarajan1982}
Varadarajan proves the following:

\begin{thm}[Varadarajan \cite{Varadarajan1982}]\label{thm:varadarajan}
Suppose that $R$ is an associative, but not necessarily unital,
ring and $M$ is 
a left $R$-module. Then the left $R[x]$-module $M[x]$ is
Noetherian if and only if 
$M$ is Noetherian and has property (F).
\end{thm}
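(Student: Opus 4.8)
The plan is to prove Varadarajan's theorem in both directions, with the main technical content lying in the "if" direction.

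The plan is to prove both implications, and throughout I would replace ``property (F)'' by the equivalent condition that $M$ is $s$-unital (the equivalence being the elementary fact recorded just above), since $s$-unitality is what interacts well with the grading by degree. For the ``only if'' direction, assume $M[x]$ is Noetherian. That $M$ is Noetherian follows by lifting chains: for a submodule $N \subseteq M$ the polynomials $N[x]$ form an $R[x]$-submodule of $M[x]$, because $(rx^m)(ax^n) = (ra)x^{m+n} \in N[x]$ whenever $a \in N$, and $N \mapsto N[x]$ is strictly order-preserving (one recovers $N$ from constant terms), so ACC passes from $M[x]$ down to $M$. That $M$ is $s$-unital I would prove by contraposition: if it is not, property (F) fails, so there is a submodule $N \subseteq M$ and an element $a$ with $Ra \subseteq N$ but $a \notin N$; then the subgroups $W_k = N[x] + \mathbb{Z}a + \mathbb{Z}ax + \cdots + \mathbb{Z}ax^{k-1}$ are $R[x]$-submodules (as $R[x]\cdot(ax^i) \subseteq N[x]$), and comparing coefficients of $x^k$ shows $ax^k \notin W_k$, so $(W_k)_k$ is a strictly ascending chain and $M[x]$ fails to be Noetherian.

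The ``if'' direction is the substantial part, and for it I would adapt the leading-coefficient proof of Hilbert's basis theorem. Fix a submodule $N \subseteq M[x]$ and, for each $d \ge 0$, let $L_d(N) \subseteq M$ be the set of leading coefficients of the degree-$d$ elements of $N$, together with $0$; each $L_d(N)$ is an $R$-submodule of $M$. The first essential use of $s$-unitality is the inclusion $L_d(N) \subseteq L_{d+1}(N)$: since $R[x]$ need not contain a bare variable ``$x$'', I cannot raise degree by multiplying by $x$, so instead, given a witness $f \in N$ of degree $d$ with leading coefficient $a$, I pick $r \in R$ with $ra = a$ and pass to $(rx)\cdot f \in N$, which has degree $d+1$ and the same leading coefficient $a$. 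As $M$ is Noetherian, the ascending chain $L_0(N) \subseteq L_1(N) \subseteq \cdots$ stabilizes at some index $D$; each $L_d(N)$ with $d \le D$ is then finitely generated, and I fix generators and lift each to a witness $f_{d,j} \in N$ of degree $d$ carrying that leading coefficient.

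It then remains to show that the finitely many $f_{d,j}$ generate $N$, which I would do by induction on the degree $n$ of a given $g \in N$ with leading coefficient $b$. Here a second use of $s$-unitality enters: since $a = ra$ forces $\mathbb{Z}a \subseteq Ra$, the submodule of $M$ generated by the chosen generators of $L_d(N)$ is simply their $R$-span, so I may write $b = \sum_j r_j a_{d,j}$ with $r_j \in R$ and no integer coefficients, taking $d = \min(n, D)$ (note that $b \in L_n(N) = L_D(N)$ when $n > D$). Subtracting $\sum_j (r_j x^{\,n-d})\cdot f_{d,j}$ from $g$ cancels the degree-$n$ term and yields an element of $N$ of strictly smaller degree, closing the induction; hence $N$ is finitely generated. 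Since every submodule of $M[x]$ being finitely generated is equivalent to $M[x]$ satisfying ACC, this proves that $M[x]$ is Noetherian. I expect the main obstacle to be exactly the two non-unital phenomena — raising degrees without a free variable, and replacing ``generated submodule'' by ``$R$-span'' — each of which is dissolved precisely by the $s$-unitality hypothesis, leaving only the classical Hilbert bookkeeping.
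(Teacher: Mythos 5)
Your argument is correct, but note that the paper does not actually prove this statement: it cites Varadarajan and instead proves the generalization Theorem~\ref{thm:polynomialnoether} via Theorem~\ref{thm:hilbertAB}. Your proof shares the central idea with the paper's — since $R[x]$ contains no bare power of $x$, degrees must be raised by acting with $rx$ where $ra=a$, which is precisely the role played by Lemma~\ref{lem:horrible} — but the organization differs in two respects. First, in the ``if'' direction you use the stabilizing chain $L_0(N)\subseteq L_1(N)\subseteq\cdots$ of leading-coefficient submodules and let the finitely many witnesses $f_{d,j}$ with $d\le D$ generate all of $N$, low-degree part included; the paper instead collects all leading coefficients into a single stable subgroup $Q$, raises all witnesses to a common degree $n$, and must treat the truncation $C\cap P$ with $C=\sum_{i<n}Bx^i$ as a separate source of generators. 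Your streamlining is legitimate in Varadarajan's setting because full $s$-unitality together with right distributivity gives $\mathbb{Z}a\subseteq Ra$, so generated submodules are plain $R$-spans; in the paper's weakly $s$-unital groups-with-operators setting this fails, which is why the sums $\mathbb{Z}(A^nS)$ and the extra generators from $C\cap P$ appear there. Second, for the ``only if'' direction you exhibit the explicit non-stabilizing chain $W_k=N[x]+\mathbb{Z}a+\mathbb{Z}ax+\cdots+\mathbb{Z}ax^{k-1}$, whereas the paper passes to the quotient $B/[c]$, observes that the induced action is trivial, and uses the infinite direct sum $\bigoplus_{n}Ex^n$ together with Proposition~\ref{prop:subquotient}; your route is more elementary, while the paper's adapts better to the operator setting. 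The supporting verifications in your write-up (that each $L_d(N)$ is a submodule, that $L_d(N)\subseteq L_{d+1}(N)$, and that each $W_k$ is an $R[x]$-submodule with $ax^k\notin W_k$) all check out.
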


After reflecting upon the constructions and results outlined
above, the authors of the present article were 
prompted to explore the following:

\begin{que}\label{que:question}
Is it possible to define a class of
``Ore module extensions'' so that these simultaneously 
generalize polynomial modules and classical Ore extensions? 
If so, can algebraical
structure results for Ore module extensions, such as
associativity and a Hilbert's basis theorem,
be established?
\end{que}

In this article, we provide affirmative answers
to these questions. We show that not only is it possible to
quite naturally define \emph{Ore module extensions}
$M[x ; \sigma_M, \delta_M]$
over \emph{Ore ring extensions} 
$R[x ; \sigma_R, \delta_R]$,
but one can also define, more generally, 
\emph{Ore group extensions}
$B[x ; \sigma_B, \delta_B]$
for any \emph{abelian group $B$ with operators} in a set $A$; the latter concept first introduced by Krull \cite{Krull1925} for finite abelian groups as \emph{generalized finite abelian groups} and then extensively studied by Noether (see e.g. \cite{Noether1929}) in the more general setting under its current name. Importantly, 
this broader approach of ours allows for 
applications in both \emph{non-associative} and 
\emph{non-unital} contexts, 
as well as in situations where a module or ring
multiplication is \emph{left distributive} but not 
necessarily right distributive.\newpage

Here is an outline of this article.

In Section \ref{sec:groupswithoperators},
we begin by reviewing some concepts and results 
related to the actions of sets on groups. 
In particular, we establish a few folkloristic results on
Noetherian groups with operators (see
Propositions~\ref{thm:threeequivalent}-\ref{prop:directproduct}).
%Additionally,
We also introduce two seemingly new concepts: stable homomorphisms for such groups
(see Definition~\ref{def:stable}) and 
weakly $s$-unital actions (see Definition~\ref{def:sunital}).

In Section \ref{sec:Ore}, 
we introduce Ore group extensions
of abelian groups with sets of operators 
(see Definition~\ref{def:oreextension}).
Using general versions of the 
Vandermonde and Leibniz identities 
(see Proposition~\ref{prop:Vandermonde} and 
Proposition~\ref{prop:Leibniz}), we derive criteria for 
associativity of such structures
(see Theorem~\ref{thm:MimpliesMx}).

In Section \ref{sec:hilbertsbasistheorem},
we establish a version of Hilbert's basis
theorem (see Theorems~\ref{thm:hilbertAB} and 
\ref{thm:polynomialnoether})
for Ore extensions of abelian groups.
In particular, we extend Theorem~\ref{thm:varadarajan}
by generalizing it to the context of 
groups with operators
(see Theorem~\ref{thm:polynomialnoether}).

In Section \ref{sec:oremodules}, we
introduce Ore module extensions
(see Definition~\ref{def:oremodule})
and apply results from earlier sections to 
these new structures. In particular, we 
obtain
%provide 
a 
generalization of Theorem~\ref{thm:Hilbertsbasistheorem}
for rings $R$ which are left distributive and 
weakly $s$-unital, but not necessarily associative
or right distributive (see 
Corollary~\ref{cor:leftdistributive}). 
This allows us to address many previously 
unreachable cases for $R$, 
such as all unital and Noetherian rings which are 
alternative or Jordan,
all Dickson’s left near-fields \cite{Pilz1983}, 
and all algebras generated by either of 
the Cayley--Dickson or 
Conway--Smith doubling procedures \cite{Lundstrom2012}.

\section{Groups with operators}\label{sec:groupswithoperators}

Throughout this article, we put
$\Z \colonequals 
\{ 0, \pm 1, \pm 2, \ldots \}$,
$\N \colonequals \{ n \in \Z \mid
n \geq 0 \}$ and
$\N_+ \colonequals \{ n \in \N \mid n > 0 \}$.
We recall the following notions and results
from \cite{Bourbaki1989}
(see also \cite{Isaacs2009,Jacobson1951}).
Let $A$ and $B$ be sets. 
Suppose that $\alpha$
is an \emph{action} of $A$ on $B$.
By this, we mean that $\alpha$ is a 
function
from $A$ to the set $B^B$ of functions 
from $B$ into itself. 
By abuse of notation,
we will often suppress
$\alpha$ and write $ab \colonequals
\alpha(a)(b)$, for $a \in A$ and
$b \in B$, placing $a$ to the \emph{left}
of $b$, and say that \emph{$B$ is a set
with operators in $A$}.

Let $S$ be a subset of $B$.
Then $S$ is called \emph{stable} if
$as \in S$, for $a \in A$ and $s \in S$.
The intersection of the family 
of stable subsets of $B$
containing $S$ is called the 
stable subset of $B$ 
\emph{generated} by $S$;
this set is denoted $\overline{S}$.
We put $A^0 S \colonequals S$ 
and for $n \in \N_+$
we let $A^n S$ denote the set of all 
elements of the form
$a_1(a_2( \cdots (a_{n-1}(a_n s)) \cdots ))$
for $a_1,\ldots,a_n \in A$
and $s \in S$. From 
\cite[Ch.~I, \S 3.2, Discussion after Def.~2]{Bourbaki1989} 
we extract the following:

\begin{prop}\label{prop:Sclosure}
Let $S$ be a subset of $B$.
Then $\overline{S} = 
\cup_{n \in \N} (A^n S)$.
\end{prop}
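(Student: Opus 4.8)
The plan is to establish the two inclusions separately, the engine of both being the recursion $A^{n+1}S = A(A^nS)$, where for $X \subseteq B$ I write $A(X) \colonequals \{ ax \mid a \in A,\ x \in X \}$. This identity is immediate from the definition of $A^nS$: a typical element $a_1(a_2(\cdots(a_{n+1}s)\cdots))$ of $A^{n+1}S$ is exactly $a_1 u$ with $u = a_2(\cdots(a_{n+1}s)\cdots) \in A^nS$, so the two sets coincide. Write $T \colonequals \cup_{n \in \N}(A^nS)$ for the right-hand side.

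First I would prove $T \subseteq \overline{S}$ by showing $A^nS \subseteq \overline{S}$ for every $n$, by induction on $n$. The base case $n = 0$ is just $S \subseteq \overline{S}$, which holds since $\overline{S}$ contains $S$ by definition. For the inductive step, assuming $A^nS \subseteq \overline{S}$, the recursion gives $A^{n+1}S = A(A^nS)$, and stability of $\overline{S}$ ensures that $ax \in \overline{S}$ for every $a \in A$ and $x \in A^nS \subseteq \overline{S}$; hence $A^{n+1}S \subseteq \overline{S}$. Taking the union over $n$ yields $T \subseteq \overline{S}$.

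For the reverse inclusion, I would verify that $T$ is a stable subset of $B$ containing $S$. The containment $S \subseteq T$ is clear, since $S = A^0S$. For stability, given $a \in A$ and $t \in T$, pick $n$ with $t \in A^nS$; then $at \in A(A^nS) = A^{n+1}S \subseteq T$. Since $\overline{S}$ is by definition the intersection of all stable subsets containing $S$, it is contained in the particular such subset $T$, giving $\overline{S} \subseteq T$. Combining the two inclusions yields $\overline{S} = T$.

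The proof is wholly routine once the recursion $A^{n+1}S = A(A^nS)$ is isolated; the only place demanding attention is the bookkeeping of the iterated operators, so that prepending one operator to an element of $A^nS$ is correctly matched with an element of $A^{n+1}S$ and no off-by-one slip occurs. I do not anticipate any real obstacle.
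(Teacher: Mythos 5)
Your proof is correct. The paper does not actually prove this proposition; it simply quotes it from Bourbaki (Ch.~I, \S 3.2), and your argument --- the recursion $A^{n+1}S = A(A^nS)$, an induction showing $\cup_{n}(A^nS) \subseteq \overline{S}$ via stability of $\overline{S}$, and the observation that $\cup_{n}(A^nS)$ is itself a stable subset containing $S$ --- is exactly the standard argument one would extract from that reference.
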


Suppose henceforth that $(B,\cdot)$ 
is a \emph{group with operators in $A$.} 
By this, we mean that the elements of $A$
act as group endomorphisms of $B$, that is
$a(b \cdot c) = 
(a b) \cdot (a c)$, for $a \in A$ and
$b,c \in B$.
A subgroup of $B$ is called \emph{$A$-stable} (or simply \emph{stable})
if it is stable as a subset of $B$.
Let $e$ denote the identity element of $B$.
Since $ae = e$, for $a \in A$, it follows that
$\{ e \}$ is always a 
stable subgroup of $B$.
Furthermore, trivially, $B$ is a stable subgroup of itself. Let $S$ be a 
nonempty subset of $B$.
The intersection of the family of stable
subgroups of $B$ that contain $S$
is called the stable subgroup of
$B$ \emph{generated} by $S$
and is denoted by 
$\langle S \rangle$. If $C$ is a 
stable subgroup of $B$ such that 
$C = \langle T \rangle$ for some
finite subset $T$ of $B$, then
$C$ is said to be \emph{finitely generated}
by $T$. From \cite[Ch.~I, \S 4.3, Prop.~2]{Bourbaki1989} we recall the following:

\begin{prop}\label{prop:generated}
Let $S$ be a nonempty subset of $B$.
Then $\langle S \rangle$
equals the set of all products of the form
$b_1 \cdots b_n$, for $n \in \N_+$,
where for each $k \in \{ 1,\ldots,n \}$,
$b_k \in \overline{S}$ or 
$b_k^{-1} \in \overline{S}$.
\end{prop}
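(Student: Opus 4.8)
The plan is to prove Proposition~\ref{prop:generated} by showing that the set of all such products $b_1 \cdots b_n$ (where each $b_k$ or its inverse lies in $\overline{S}$) is itself a stable subgroup containing $S$, and that it is contained in every stable subgroup containing $S$; since $\langle S \rangle$ is the intersection of all stable subgroups containing $S$, these two containments force equality.

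First I would introduce the candidate set, say $G \colonequals \{ b_1 \cdots b_n \mid n \in \N_+, \text{ each } b_k \in \overline{S} \text{ or } b_k^{-1} \in \overline{S} \}$, and verify that it is a subgroup of $B$. Closure under the group operation is immediate from the definition, since concatenating two such products yields another product of the same form. For closure under inverses, I would use that $(b_1 \cdots b_n)^{-1} = b_n^{-1} \cdots b_1^{-1}$, and observe that if $b_k \in \overline{S}$ or $b_k^{-1} \in \overline{S}$, then the same dichotomy holds for $b_k^{-1}$; thus the reversed product of inverses is again of the required form. The identity $e$ is obtained, for instance, as $b_1 b_1^{-1}$ for any $b_1 \in \overline{S}$ (using that $S$, and hence $\overline{S}$, is nonempty).

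Next I would check that $G$ is stable, i.e.\ that $a g \in G$ for all $a \in A$ and $g \in G$. Here the crucial input is that the elements of $A$ act as group endomorphisms, so $a(b_1 \cdots b_n) = (a b_1) \cdots (a b_n)$. It therefore suffices to know that each factor $a b_k$ again satisfies the membership dichotomy. If $b_k \in \overline{S}$, then $a b_k \in \overline{S}$ because $\overline{S}$ is stable (by its very definition as the stable subset generated by $S$, cf.\ Proposition~\ref{prop:Sclosure}). If instead $b_k^{-1} \in \overline{S}$, then $a(b_k^{-1}) \in \overline{S}$ by the same stability, and since $a$ is an endomorphism we have $a(b_k^{-1}) = (a b_k)^{-1}$, so $(a b_k)^{-1} \in \overline{S}$. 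In either case $a b_k$ meets the dichotomy, so $a g \in G$. Finally, $S \subseteq \overline{S} \subseteq G$, so $G$ is a stable subgroup containing $S$, which gives $\langle S \rangle \subseteq G$.

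For the reverse inclusion, I would argue that any stable subgroup $H$ containing $S$ must contain $G$: since $H$ is stable and contains $S$, it contains $\overline{S}$ (as $\overline{S}$ is the smallest stable set containing $S$), and being a subgroup it contains all inverses and all finite products of elements of $\overline{S}$ together with their inverses, hence all of $G$. Taking the intersection over all such $H$ yields $\langle S \rangle \supseteq G$, completing the proof. The step I expect to require the most care is the stability verification, specifically the interplay $a(b_k^{-1}) = (a b_k)^{-1}$; this is exactly where the endomorphism hypothesis on the operators is indispensable, and it is what makes the inverse factors behave correctly under the action.
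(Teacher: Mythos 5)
Your proof is correct; note that the paper itself gives no proof of this proposition, recalling it directly from Bourbaki (Ch.~I, \S 4.3, Prop.~2). Your argument --- showing the set of products is a stable subgroup containing $S$ (with the key step $a(b_k^{-1}) = (ab_k)^{-1}$ handling stability of inverse factors) and is contained in any stable subgroup containing $S$ --- is exactly the standard argument one would find there, so there is nothing to compare against in the paper and no gap to report.
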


Consider the partially ordered set 
of stable subgroups of $B$, ordered
by inclusion. We say that
$B$ is \emph{stably Noetherian} if this partially
ordered set satisfies 
the ascending chain condition.
The next result is folklore.
Due to the lack of an appropriate reference,
we include a proof (see  
\cite[Thm.~11.4]{Isaacs2009} 
for the abelian groups case).

\begin{prop}\label{thm:threeequivalent} 
The following assertions are equivalent:

\begin{enumerate}[{\rm (i)}]
\item $B$ is stably Noetherian;

\item any nonempty family of
stable subgroups of $B$ has a maximal
element;

\item every stable subgroup of 
$B$ is finitely generated.
\end{enumerate}
\end{prop}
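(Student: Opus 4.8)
The plan is to prove the standard cycle of implications (i) $\Rightarrow$ (ii) $\Rightarrow$ (iii) $\Rightarrow$ (i), adapting the classical arguments for Noetherian modules to the setting of stable subgroups, where the only novelty is that every subgroup in sight must be verified to be $A$-stable and the generating sets must be understood via Proposition~\ref{prop:generated}.

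For (i) $\Rightarrow$ (ii), I would argue by contraposition. Suppose some nonempty family $\mathcal{F}$ of stable subgroups has no maximal element. Then starting from any $C_1 \in \mathcal{F}$, non-maximality lets me pick $C_2 \in \mathcal{F}$ with $C_1 \subsetneq C_2$, and iterating produces a strictly ascending chain $C_1 \subsetneq C_2 \subsetneq \cdots$, contradicting the ascending chain condition. This uses (a countable form of) the axiom of choice, which is standard and which I would not belabor.

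For (ii) $\Rightarrow$ (iii), let $C$ be a stable subgroup of $B$, and consider the family $\mathcal{F}$ of all \emph{finitely generated} stable subgroups of $B$ contained in $C$. This family is nonempty since $\{e\} = \langle \emptyset \rangle$ (or any singleton-generated subgroup) lies in it, so by (ii) it has a maximal element $D = \langle T \rangle$ for some finite $T$. The claim is that $D = C$. If not, pick $c \in C \setminus D$; then $\langle T \cup \{c\} \rangle$ is a finitely generated stable subgroup contained in $C$ that strictly contains $D$, contradicting maximality. Here I must be slightly careful that $\langle T \cup \{c\} \rangle \subseteq C$, which holds because $C$ is stable and contains both $D$ and $c$, so it contains every product of the form described in Proposition~\ref{prop:generated}.

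For (iii) $\Rightarrow$ (i), take any ascending chain $C_1 \subseteq C_2 \subseteq \cdots$ of stable subgroups and form their union $C \colonequals \cup_{n} C_n$. The main point, and the step I expect to require the most care, is to check that $C$ is itself a stable subgroup: it is a subgroup because the chain is ascending (any two elements lie in a common $C_n$), and it is stable because if $c \in C$ then $c \in C_n$ for some $n$, whence $ac \in C_n \subseteq C$ for every $a \in A$. By (iii), $C = \langle T \rangle$ for a finite set $T$; each of the finitely many elements of $T$ lies in some $C_{n_i}$, so taking $N = \max_i n_i$ gives $T \subseteq C_N$ and hence $C = \langle T \rangle \subseteq C_N$, forcing the chain to stabilize at $N$. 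The only subtlety beyond the classical module argument is confirming stability of the union and invoking Proposition~\ref{prop:generated} to make sense of finite generation in this operator setting, but neither presents a genuine obstacle.
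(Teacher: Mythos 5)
Your proposal is correct and follows essentially the same route as the paper's proof: the same cycle (i)$\Rightarrow$(ii)$\Rightarrow$(iii)$\Rightarrow$(i), the same contradiction argument building a non-stabilizing chain, the same family of finitely generated stable subgroups of $C$ with a maximal element, and the same union-of-a-chain argument. The only cosmetic difference is that the paper witnesses nonemptiness of $\mathcal{F}$ via $\{e\}=\langle e\rangle$ rather than $\langle\emptyset\rangle$ (which is not defined there, since $\langle S\rangle$ is only introduced for nonempty $S$), but your parenthetical singleton alternative covers this.
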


\begin{proof}
(i)$\Rightarrow$(ii):
Suppose that (i) holds.
Seeking a contradiction, 
suppose that $\mathcal{F}$ is a nonempty 
family of stable subgroups of $B$
with no maximal element. Take 
$B_1 \in \mathcal{F}$. Since $B_1$ is
not maximal, there is $B_2 \in 
\mathcal{F}$ with $B_1 \subsetneq B_2$.
Continuing in this way we can construct
an infinite ascending 
chain of stable subgroups
of $B$ that does not stabilize, which is 
a contradiction.

(ii)$\Rightarrow$(iii):
Suppose that (ii) holds. Let $C$ be a
stable subgroup of $B$.
Define $\mathcal{F}$ to be the set 
of all finitely generated 
stable subgroups of $C$.
Note that 
$\mathcal{F} \neq \emptyset$, because $\{ e \} \in \mathcal{F}$.
By hypothesis, $\mathcal{F}$ has a 
maximal element $M$. By the definition
of $\mathcal{F}$ we have $M \subseteq C$.
Seeking a contradiction, suppose that
$M \subsetneq C$. Take $c \in C \setminus M$,
$k \geq 1$ and $m_1,\ldots,m_k \in M$
such that $M = \langle 
m_1,\ldots,m_k \rangle$.
Put $N \colonequals \langle 
m_1,\ldots , m_k , c \rangle$.
Then $N\in\mathcal{F}$ with $M\subsetneq N$, which contradicts the maximality of $M$. Thus $M=C$, whence $C$ is finitely generated.

(iii)$\Rightarrow$(i):
Suppose that (iii) holds. Let 
$C_1 \subseteq C_2 \subseteq \cdots$ be
an ascending chain of 
stable subgroups of $B$. Then, clearly, 
$C \colonequals \cup_{i \geq 1} C_i$
is a stable subgroup of $B$.
By hypothesis, there is $n \in \N_+$,
$i_1 \leq \dots \leq i_n$
in $\N_+$ and $c_{i_k} \in C_{i_k}$,
for $k \in \{ 1,\ldots,n \}$, with
$C = \langle c_{i_1}, \ldots ,
c_{i_n} \rangle$.
Let $k \geq i_n$.
Then $C_k \supseteq C_{i_n}$.
Since $c_{i_1},\ldots,c_{i_n} \in C_{i_n}$,
it follows that $C = \langle c_{i_1}, 
\ldots , c_{i_n} \rangle \subseteq C_{i_n}$.
Thus, $C_k \subseteq C \subseteq C_{i_n}$,
so that $C_k = C_{i_n}$.
In particular, $C_{i_n} = C_{i_n+1} = \cdots$, 
showing that the chain stabilizes.
\end{proof}

Let $C$ be a normal stable subgroup 
of $B$. The quotient group $B/C$ is
then a group with operators in $A$ if we 
put $a(b C) \colonequals (ab) C$
for $a \in A$ and $b \in B$
(see \cite[Ch.~I, \S 4.4, Thm.~2]{Bourbaki1989}).
The next result is also folklore.
Due to the lack of an appropriate reference,
we include a proof.

\begin{prop}\label{prop:subquotient}
Let $C$ be a normal stable 
subgroup of $B$.
Then $B$ is stably Noetherian if and only 
if $C$ and $B/C$ are stably Noetherian.
\end{prop}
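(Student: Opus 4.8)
The plan is to prove the statement by establishing the two directions separately, relying on the characterization from Proposition~\ref{thm:threeequivalent} that a group with operators is stably Noetherian if and only if every ascending chain of stable subgroups stabilizes (equivalently, every stable subgroup is finitely generated). This is the standard ``Noetherian is preserved under sub- and quotient-objects, and is detected by them'' argument, transported to the setting of groups with operators.

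For the forward direction, assume $B$ is stably Noetherian. First I would observe that any stable subgroup of $C$ is also a stable subgroup of $B$; hence any ascending chain of stable subgroups of $C$ is an ascending chain of stable subgroups of $B$ and must stabilize, so $C$ is stably Noetherian. For $B/C$, I would use that the quotient $B/C$ carries the operator action $a(bC) \colonequals (ab)C$ recalled just before the statement, and that the canonical projection $\pi \colon B \to B/C$ is a surjective homomorphism commuting with the $A$-action. The key point is the correspondence between stable subgroups of $B/C$ and stable subgroups of $B$ containing $C$: given a chain $D_1 \subseteq D_2 \subseteq \cdots$ of stable subgroups of $B/C$, the preimages $\pi^{-1}(D_i)$ form an ascending chain of stable subgroups of $B$ (stability of each preimage following from the operator-compatibility of $\pi$), which stabilizes by hypothesis; since $\pi$ is surjective, $\pi(\pi^{-1}(D_i)) = D_i$, so the original chain stabilizes as well.

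For the converse, assume both $C$ and $B/C$ are stably Noetherian, and let $D_1 \subseteq D_2 \subseteq \cdots$ be an ascending chain of stable subgroups of $B$. I would apply the two hypotheses to the two induced chains: the intersections $D_i \cap C$ form an ascending chain of stable subgroups of $C$, and the images $\pi(D_i)$ form an ascending chain of stable subgroups of $B/C$. Both stabilize, say from some common index $n$ onward. The final step is the standard ``sandwich'' lemma: if $D_n \subseteq D_{n+1}$ with $D_n \cap C = D_{n+1} \cap C$ and $\pi(D_n) = \pi(D_{n+1})$, then $D_n = D_{n+1}$. This is verified by taking $d \in D_{n+1}$, finding $d' \in D_n$ with $\pi(d) = \pi(d')$, so that $d(d')^{-1} \in C \cap D_{n+1} = C \cap D_n \subseteq D_n$, whence $d \in D_n$.

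The main obstacle, and the only place where the ``operators'' flavor of the problem genuinely enters beyond the classical group case, is ensuring at every step that the subgroups one constructs—preimages, intersections, and images under $\pi$—are actually \emph{stable} under the $A$-action, and that the correspondence between stable subgroups of $B/C$ and those of $B$ containing $C$ respects stability. These verifications follow directly from the fact that the $A$-action consists of group endomorphisms commuting with $\pi$, so they are routine; the genuinely delicate point is just that the quotient action is well defined, which has already been recorded in the cited result from Bourbaki. Apart from this bookkeeping, the argument is the familiar short exact sequence manipulation for Noetherian objects, and no new identity specific to groups with operators is required.
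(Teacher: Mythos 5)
Your proposal is correct and follows essentially the same route as the paper: sub- and quotient Noetherianity via the correspondence of stable subgroups, and the converse via the chains $D_i \cap C$ and $\pi(D_i)$ (the paper writes the latter as $D_iC$) together with the standard sandwich argument. The only cosmetic difference is that you verify the preimage/image correspondence by hand where the paper cites Bourbaki.
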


\begin{proof}
We generalize the proof for the abelian case
given in \cite[Thm.~11.6]{Isaacs2009}. 

Suppose that $B$ is stably Noetherian.
Any stable subgroup $D$ 
of $C$ is also a stable subgroup of $B$.
Hence, by assumption and
Proposition~\ref{thm:threeequivalent},
$D$ is finitely generated. Thus, 
$C$ is stably Noetherian. Now we show that
$B/C$ is stably Noetherian. 
By \cite[Ch.~I, \S 4.6, Cor.~1]{Bourbaki1989},
the quotient map $B \to B/C$ induces 
an inclusion preserving bijection between
the family of stable subgroups of $B$
containing $C$ and the family of stable
subgroups of $B/C$. From this bijection
it follows that $B/C$ is stably Noetherian,
since $B$ is stably Noetherian.

Suppose that $C$ and $B/C$
are stably Noetherian. Let 
$D_1 \subseteq D_2 \subseteq \cdots$
be an ascending chain of stable subgroups
of $B$. Then $D_1 \cap C \subseteq 
D_2 \cap C \subseteq \cdots$ and
$D_1 C \subseteq D_2 C  \subseteq \cdots$ are ascending chains of stable
subgroups of $C$ and $B/C$ respectively.
By assumption 
there is $n \in \N_+$ 
such that for any $i \geq n$, the
equalities
$D_i \cap C = D_n \cap C$ and
$D_i C = D_n C$ hold. Take $i \geq n$.
We wish to show that $D_i = D_n$.
By assumption $D_n \subseteq D_i$.
Now we show that $D_i \subseteq D_n$.
Take $d_i \in D_i$.
Then $d_i \in D_i C = D_n C$ so that
$d_i = e_n c$ for some $e_n \in D_n$
and $c \in C$. Then
$e_n^{-1} \in D_n \subseteq D_i$ 
so that $c = e_n^{-1} d_i 
\in C \cap D_i = C \cap D_n \subseteq D_n$.
Hence, $d_i = e_n c \in D_n D_n \subseteq D_n$.
\end{proof}

Let $\{ B_i \}_{i \in I}$ be a 
family of groups with operators in $A$, and let $B$ be the direct product 
$\prod_{i\in I} B_i$. Consider the action of $A$ on
$B$ defined by $a (b_i)_{i \in I}
\colonequals (a b_i)_{i \in I}$,
for $a \in A$ and $b_i \in B_i$.
With this structure, $B$ is a group
with operators in $A$.

\begin{prop}\label{prop:directproduct}
Let $n \in \N_+$, and let
$\{B_i\}_{i=1}^n$ be a family of groups 
with operators in $A$.
Consider $B \colonequals B_1 \times \cdots \times B_n$ 
as a group with operators in $A$,
in the sense defined above.
Then $B$ is stably Noetherian if and only if $B_i$ is 
stably Noetherian for every $i \in \{1,\ldots,n\}$.
\end{prop}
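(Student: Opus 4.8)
The plan is to reduce the statement to the two-factor case and then combine it with Proposition~\ref{prop:subquotient} and an induction on $n$. The case $n=1$ is trivial, so the heart of the matter is the case $n=2$: I would show that $B_1 \times B_2$ is stably Noetherian if and only if both $B_1$ and $B_2$ are, and then bootstrap to arbitrary $n$.

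For the case $n=2$, set $B \colonequals B_1 \times B_2$ and consider the subset $C \colonequals B_1 \times \{ e_2 \}$, where $e_2$ is the identity of $B_2$. First I would verify that $C$ is a normal stable subgroup of $B$: normality is automatic since $C$ is a direct factor, and stability follows from $a(b_1, e_2) = (a b_1 , a e_2) = (a b_1 , e_2) \in C$ for all $a \in A$ and $b_1 \in B_1$, using that $a e_2 = e_2$. Next I would identify the two pieces appearing in Proposition~\ref{prop:subquotient} with the factors: the inclusion $B_1 \to C$, $b_1 \mapsto (b_1, e_2)$, is a group isomorphism that intertwines the $A$-actions, and the projection $B \to B_2$, $(b_1,b_2) \mapsto b_2$, is a surjective group homomorphism with kernel exactly $C$ that is likewise $A$-equivariant; hence $C \cong B_1$ and $B/C \cong B_2$ as groups with operators in $A$.

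The point worth making explicit is that any $A$-equivariant group isomorphism sends stable subgroups to stable subgroups and preserves inclusions, so it induces an order isomorphism between the posets of stable subgroups; in particular, the ascending chain condition on stable subgroups, and hence stable Noetherianity, is transported across such isomorphisms. Granting this, $C$ is stably Noetherian if and only if $B_1$ is, and $B/C$ is stably Noetherian if and only if $B_2$ is. Proposition~\ref{prop:subquotient}, applied to the normal stable subgroup $C$ of $B$, then states that $B$ is stably Noetherian if and only if both $C$ and $B/C$ are, which settles the case $n=2$.

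Finally I would run the induction. For $n > 2$, write $B = (B_1 \times \cdots \times B_{n-1}) \times B_n$, regarded as a group with operators in $A$ via the componentwise action, and note that this is $A$-equivariantly isomorphic to the full product $B_1 \times \cdots \times B_n$. Applying the case $n=2$ gives that $B$ is stably Noetherian if and only if $B_1 \times \cdots \times B_{n-1}$ and $B_n$ are both stably Noetherian, and the induction hypothesis then disposes of the first factor. I expect the only genuine care to be in the bookkeeping of $A$-equivariance, that is, in checking that the projection and inclusion maps intertwine the $A$-actions, since every other ingredient is already packaged in Proposition~\ref{prop:subquotient}.
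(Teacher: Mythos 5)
Your proposal is correct and follows essentially the same route as the paper: identify one factor as a normal stable subgroup whose quotient is the product of the remaining factors, apply Proposition~\ref{prop:subquotient}, and induct on $n$. The only difference is presentational --- you make the $A$-equivariance of the inclusion and projection maps, and the transport of stable Noetherianity along equivariant isomorphisms, explicit, whereas the paper leaves these routine verifications implicit.
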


\begin{proof}
The ``only if'' statement follows from
Proposition~\ref{prop:subquotient} since 
each $B_i$ is isomorphic to a normal
stable subgroup of $B$.
Now we show the ``if'' statement.
Suppose that each $B_i$ is stably Noetherian.
We prove, by induction on $n$,
that $ B_1 \times
\cdots \times B_n$ is stably Noetherian.
The claim trivially holds if $n=1$.
Suppose now that 
$B_1 \times \cdots \times B_k$ is
stably Noetherian for some $k \in \N_+$.
Put $B \colonequals B_1 \times
\cdots \times B_{k+1}$ and  
$C \colonequals B_{k+1}$. Then $C$ is
isomorphic to
a stably Noetherian normal stable subgroup 
of $B$ such that $B/C \cong 
B_1 \times \cdots \times B_k$.
By the induction hypothesis, $B/C$
is therefore also stably Noetherian. By Proposition~\ref{prop:subquotient}, 
$B$ is stably Noetherian.
\end{proof}

\begin{defi}[$A$-stable homomorphism]\label{def:stable}
Let $B$ and $C$ be groups with 
operators in $A$.
Suppose that $f\colon B \to C$ is a 
group homomorphism. Then
we say that $f$ is \emph{$A$-stable} 
if for every $b \in B$, the inclusion 
$f(Ab) \subseteq A f(b)$ holds.
\end{defi}

\begin{prop}\label{prop:kerf}
Let $B$, $C$, and $D$ be groups with 
operators in $A$.
Suppose that $f\colon B \to C$ and 
$g\colon C \to D$ are 
$A$-stable group homomorphisms.
Then $g \circ f\colon B \to D$ is $A$-stable
and $\ker(f)$ is an $A$-stable subgroup 
of $B$.
\end{prop}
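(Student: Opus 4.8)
The plan is to verify the two assertions separately, each following directly from the definition of $A$-stability (Definition~\ref{def:stable}) together with the fact that the operators in $A$ act as group endomorphisms. No chain conditions or generation results are needed here; the whole argument is a short diagram chase at the level of individual elements.

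For the composite, I would fix $b \in B$ and chain the two stability inclusions. Since $f$ is $A$-stable, $f(Ab) \subseteq A f(b)$; applying $g$ and then using that $g$ is $A$-stable yields $g(f(Ab)) \subseteq g(A f(b)) \subseteq A\, g(f(b))$. Because $(g \circ f)(Ab) = g(f(Ab))$ and $A (g \circ f)(b) = A\, g(f(b))$, this gives the required inclusion $(g \circ f)(Ab) \subseteq A (g \circ f)(b)$, so $g \circ f$ is $A$-stable.

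For the kernel, I would first recall that $\ker(f)$ is a subgroup of $B$ simply because $f$ is a group homomorphism, so it only remains to check stability. Writing $e_C$ for the identity of $C$, take $a \in A$ and $k \in \ker(f)$, so that $f(k) = e_C$. Here I would invoke the observation made just before the definition of stable subgroups, namely that every operator fixes the identity ($a e = e$), which forces $A f(k) = A e_C = \{ e_C \}$. By $A$-stability of $f$ we then have $f(ak) \in f(Ak) \subseteq A f(k) = \{ e_C \}$, so $f(ak) = e_C$ and hence $ak \in \ker(f)$, establishing stability.

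I do not expect any genuine obstacle. The only subtle point worth flagging is that the kernel computation collapses precisely because $A f(k) = \{ e_C \}$: without the fact that operators act as endomorphisms (and therefore fix the identity), the inclusion $f(ak) \in A f(k)$ guaranteed by $A$-stability would not by itself pin down $f(ak)$ to be trivial.
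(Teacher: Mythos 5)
Your proposal is correct and follows essentially the same route as the paper: the composite is handled by chaining the two stability inclusions $g(f(Ab)) \subseteq g(Af(b)) \subseteq A\,g(f(b))$, and the kernel is shown stable via $f(A\ker(f)) \subseteq A f(\ker(f)) = Ae = \{e\}$, which is exactly your element-wise computation written set-theoretically. Your explicit remark that operators fix the identity (because they act as group endomorphisms) is the same fact the paper relies on implicitly when writing $Ae = \{e\}$.
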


\begin{proof}
Take $b \in B$. Then
$g(f(Ab)) \subseteq
g(A f(b)) \subseteq A g(f(b))$. 
Therefore, $g \circ f$ is $A$-stable.
Also $f(A \ker(f)) \subseteq A f(\ker(f)) = Ae = \{e\}$.
Thus, $\ker(f)$ is $A$-stable. 
\end{proof}

\begin{prop}\label{prop:surinj}
Let $B$ be a stably Noetherian group with 
operators in $A$. Suppose that
$f\colon B \to B$ is a surjective 
$A$-stable group homomorphism.
Then $f$ is bijective.
\end{prop}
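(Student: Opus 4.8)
The plan is to run the standard ``a surjective endomorphism of a Noetherian object is injective'' argument, adapted to the setting of $A$-stable subgroups. Since $f$ is already assumed surjective, it suffices to prove that $f$ is injective, i.e. that $\ker(f) = \{e\}$. For this I would study the ascending chain of kernels of the iterates of $f$ and invoke the stably Noetherian hypothesis.

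First I would record that each iterate $f^n \colon B \to B$ is again an $A$-stable group homomorphism. This follows by induction on $n$ from Proposition~\ref{prop:kerf}, since the composition of two $A$-stable homomorphisms is $A$-stable. Consequently, again by Proposition~\ref{prop:kerf}, each $\ker(f^n)$ is an $A$-stable subgroup of $B$. I would then note that these subgroups form an ascending chain $\ker(f) \subseteq \ker(f^2) \subseteq \cdots$, because $f^n(b) = e$ forces $f^{n+1}(b) = f(e) = e$. Since $B$ is stably Noetherian, this chain stabilizes, so there is some $n \in \N_+$ with $\ker(f^n) = \ker(f^{n+1})$.

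Next I would exploit surjectivity. A composition of surjective maps is surjective, so $f^n$ is surjective. To establish injectivity of $f$, take any $b \in \ker(f)$; by surjectivity of $f^n$ choose $c \in B$ with $f^n(c) = b$. Then $f^{n+1}(c) = f(b) = e$, so $c \in \ker(f^{n+1}) = \ker(f^n)$, whence $b = f^n(c) = e$. Thus $\ker(f) = \{e\}$, so $f$ is injective, and combined with the hypothesis of surjectivity this makes $f$ bijective.

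I do not anticipate a serious obstacle here. The only point that requires genuine care is the transfer of $A$-stability from $f$ to its iterates $f^n$ and to their kernels, since the stably Noetherian hypothesis controls chains only among the \emph{$A$-stable} subgroups; this is exactly what Proposition~\ref{prop:kerf} supplies, guaranteeing that the chain $\{\ker(f^n)\}_{n}$ lives in the partially ordered set to which stable Noetherianity applies.
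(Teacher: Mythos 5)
Your proof is correct and follows essentially the same route as the paper: iterate $f$, use Proposition~\ref{prop:kerf} to keep the kernels $\ker(f^n)$ inside the poset of $A$-stable subgroups, stabilize the chain by stable Noetherianity, and then use surjectivity of $f^n$ to force $\ker(f)=\{e\}$. The only cosmetic difference is that the paper phrases the last step as the claim $\ker(f^n)\cap\mathrm{im}(f^n)=\{e\}$ (using $\ker(f^{2n})=\ker(f^n)$), whereas you pull a preimage of an element of $\ker(f)$ directly and use $\ker(f^{n+1})=\ker(f^n)$; both are the same argument.
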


\begin{proof}
We want to show that 
$\ker(f) = \{ e \}$, i.e. $f$ is injective. 
By Proposition~\ref{prop:kerf}, all the 
function compositions $f^k \colonequals 
f \circ \cdots \circ f$ ($k$
functions), for $k \in \N_+$, are
$A$-stable group homomorphisms. By 
Proposition~\ref{prop:kerf} again,
$\ker(f^k)$ 
is an $A$-stable subgroup of $B$
for every $k \in \N_+$.
Clearly, $\ker(f) \subseteq \ker(f^2)
 \subseteq \cdots$.
Since $B$ is stably Noetherian, there is
$n \in \N_+$ such that
$\ker(f^n) = \ker(f^{n+1}) = \cdots$.
We claim that 
$\ker(f^n) \cap {\rm im}(f^n) = \{ e\}$.
Let us assume for a moment that this
claim holds. Since $f$ is 
surjective, it follows that
$\{ e \} = \ker(f^n) \cap {\rm im}(f^n) =
\ker(f^n) \cap B = \ker(f^n)$.
Thus, $\ker(f) \subseteq \ker(f^n) = 
\{ e \}$ so that $\ker(f) = \{ e \}$.
Now we show the claim. Trivially,
$\ker(f^n) \cap {\rm im}(f^n)
\supseteq \{ e \}$. Now we show the 
reversed inclusion. To this end,
take $x \in \ker(f^n) \cap {\rm im}(f^n)$.
Then $f^n(x) = e$ and there is $y \in B$
with $f^n(y) = x$. Thus, $f^{2n}(y) =
f^n(f^n(y)) = f^n(x) = e$. Hence,
$y \in \ker(f^{2n}) = \ker(f^n)$ so that
$x = f^n(y) = e$. Therefore,
$\ker(f^n) \cap {\rm im}(f^n) \subseteq
\{ e\}$.
\end{proof}

\begin{defi}[$\tau$-twist]
Let $B$ and $C$ be groups with operators 
in $A$. Suppose that $f\colon B \to C$ is a group
homomorphism. Let $\tau$ be a map $A \to A$. 
We say that $f$ is 
\emph{$\tau$-twisted} if for all $a \in A$
and all $b \in B$, the equality 
$f(ab) = \tau(a) f(b)$ holds.
\end{defi}

\begin{prop}\label{prop:tautwisted}
Let $B$ and $C$ be groups with operators 
in $A$. Suppose that $f\colon B \to C$ is a 
$\tau$-twisted group
homomorphism for some map $\tau\colon A \to A$.
Then $f$ is $A$-stable.
\end{prop}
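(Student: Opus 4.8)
The plan is to verify the $A$-stability condition directly from the definition. Recall that $f$ is $A$-stable means that for every $b \in B$, the inclusion $f(Ab) \subseteq A f(b)$ holds, where $Ab = \{ ab \mid a \in A \}$ denotes the set of all operator-translates of $b$.

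First I would fix an arbitrary $b \in B$ and take an arbitrary element of $f(Ab)$. Such an element has the form $f(ab)$ for some $a \in A$. By the $\tau$-twisted hypothesis, $f(ab) = \tau(a) f(b)$. Since $\tau(a) \in A$, the element $\tau(a) f(b)$ lies in $A f(b) = \{ a' f(b) \mid a' \in A \}$. Hence $f(ab) \in A f(b)$, and since $a$ was arbitrary this gives $f(Ab) \subseteq A f(b)$. As $b$ was arbitrary, $f$ is $A$-stable.

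I do not anticipate any genuine obstacle here: the proof is a one-line unwinding of the two definitions, with the only point worth noting being that $\tau$ maps into $A$, so that $\tau(a)$ is a legitimate operator and $\tau(a) f(b)$ is indeed an element of $A f(b)$. There is nothing to prove about $f$ respecting the group structure, since that is already assumed and the $A$-stability condition concerns only the operator action.
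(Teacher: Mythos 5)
Your proof is correct and is essentially the same as the paper's, which simply writes $f(Ab) = \tau(A)f(b) \subseteq A f(b)$; you have just unwound this set inclusion element by element. No further comment is needed.
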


\begin{proof}
Take $b \in B$. Then $f(Ab) = \tau(A)f(b)
\subseteq A f(b)$.
%This is clear.
\end{proof}

\begin{defi}[Weakly $s$-unital action]\label{def:sunital}
Suppose that $B$ is a group with operators
in $A$. Let $S$ be a nonempty subset 
of $B$. Put $\widetilde{S}
\colonequals \cup_{n \in \N_+} (A^n S)$. %and 
Let $[S]$
denote the set of all $b_1 \cdots b_n$,
for $n \in \N_+$, where for each 
$k \in \{ 1,\ldots,n \}$, $b_k \in 
\widetilde{S}$ or $b_k^{-1} \in \widetilde{S}$. 
Clearly, $[S]$ is a stable subgroup 
of $B$. We say that the action of $A$
on $B$ is \emph{$s$-unital} (resp. \emph{weakly $s$-unital}) if 
for every $b \in B$ the relation
$b \in Ab$ (resp. $b \in [b]$) holds.
\end{defi}

\begin{prop}\label{prop:weak-s-unital-action}
The following assertions
are equivalent:
\begin{enumerate}[{\rm (i)}]
\item the action of $A$ on $B$
is weakly $s$-unital;

\item for every nonempty subset
$S$ of $B$, the equality 
$\langle S \rangle = [S]$ holds.
\end{enumerate}
\end{prop}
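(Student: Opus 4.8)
The plan is to reduce the equivalence to a single containment and then exploit the fact that the only difference between $\langle S \rangle$ and $[S]$ is whether the ``zeroth layer'' $A^0 S = S$ is permitted among the generators. The starting observation is that, by Proposition~\ref{prop:Sclosure}, $\overline{S} = \cup_{n \in \N}(A^n S) = S \cup \widetilde{S}$, so that $\widetilde{S} \subseteq \overline{S}$. Comparing the description of $[S]$ in Definition~\ref{def:sunital} with that of $\langle S \rangle$ in Proposition~\ref{prop:generated}, it follows immediately that $[S] \subseteq \langle S \rangle$ for every nonempty subset $S$ of $B$, with no hypothesis on the action. Since $[S]$ is a stable subgroup of $B$, while $\langle S \rangle$ is by definition the smallest stable subgroup containing $S$, the reverse inclusion $\langle S \rangle \subseteq [S]$ holds if and only if $S \subseteq [S]$. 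Hence $\langle S \rangle = [S]$ if and only if $S \subseteq [S]$, and the whole proposition becomes a statement about when each generator lands in $[S]$.

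For (i)$\Rightarrow$(ii), I would assume the action is weakly $s$-unital and fix a nonempty $S$. For each $s \in S$ the hypothesis gives $s \in [s]$. I would then verify the monotonicity $[\{s\}] \subseteq [S]$, which is routine: since $\{s\} \subseteq S$ one has $\widetilde{\{s\}} \subseteq \widetilde{S}$, so any product of elements of $\widetilde{\{s\}}$ and their inverses is in particular such a product over $\widetilde{S}$. Thus $s \in [s] \subseteq [S]$, giving $S \subseteq [S]$ and therefore $\langle S \rangle = [S]$ by the reduction above.

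For (ii)$\Rightarrow$(i), I would simply specialize the hypothesis to singletons: applying (ii) with $S = \{b\}$ for an arbitrary $b \in B$ yields $\langle \{b\} \rangle = [\{b\}] = [b]$. Since $b \in \langle \{b\} \rangle$ trivially, we obtain $b \in [b]$, which is exactly weak $s$-unitality.

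I do not anticipate a serious obstacle; essentially all the content lies in the opening reduction. The one point that needs care is the identity $\overline{S} = S \cup \widetilde{S}$ together with the observation, already recorded in Definition~\ref{def:sunital}, that $[S]$ is automatically a stable subgroup but need \emph{not} contain $S$ itself. It is precisely this potential failure of $S \subseteq [S]$ that the weak $s$-unitality hypothesis is designed to rule out, which is why testing on singletons suffices to recover the hypothesis in the converse direction.
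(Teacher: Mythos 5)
Your proposal is correct and follows essentially the same route as the paper: both establish $[S]\subseteq\langle S\rangle$ via Propositions~\ref{prop:Sclosure} and \ref{prop:generated}, obtain the reverse inclusion from $s\in[s]\subseteq[S]$ together with the minimality of $\langle S\rangle$ among stable subgroups containing $S$, and prove (ii)$\Rightarrow$(i) by testing on singletons. Your explicit reduction to the single condition $S\subseteq[S]$ is merely a repackaging of the same argument.
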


\begin{proof}
(i)$\Rightarrow$(ii):
Suppose that (i) holds. Let $S$ be a 
nonempty subset of $B$.
By Proposition~\ref{prop:Sclosure}, $\widetilde{S} \subseteq 
\overline{S}$ and hence, 
by Proposition~\ref{prop:generated}, 
$[S] \subseteq  \langle S \rangle$.
Now we show the reversed inclusion.
Take $s \in S$. By (i),  $s \in [s] \subseteq [S]$.
Thus, $[S]$ is a stable subgroup of $B$
containing $S$. By the definition of 
$\langle S \rangle$ we get that 
$\langle S \rangle \subseteq [S]$.
This proves (ii).

(ii)$\Rightarrow$(i):
Suppose that (ii) holds. If $b \in B$,
then $b \in \langle b \rangle = [b]$.
This proves (i).
\end{proof}

\begin{ex}\label{ex:inverseaction}
Suppose that $B$ is an abelian group and  
$A = \{ a \}$ is a singleton set. Define the
action of $A$ on $B$ by $ab \colonequals b^{-1}$ for $b \in B$.
Since $a(ab) = (b^{-1})^{-1} = b$, the action of 
$A$ on $B$ is weakly $s$-unital. Note, however, that
since $a b  \neq b$ for all $b \in B$ with 
$b^2 \neq e$, the action is $s$-unital if 
and only if $B$ is a Boolean group.
\end{ex}

\section{Ore group extensions}\label{sec:Ore}

Throughout this section,
$B$ is an \emph{abelian} group 
with operators in a nonempty set $A$,
$+$ denotes the group operation in $B$,
and $0$ denotes the identity 
element of~$B$.

\begin{defi}[Zero element]\label{defi:zeroelement}
We always assume that $A$ has 
a \emph{zero element}.
By this, we mean an element
$\epsilon \in A$ such that for any
$b \in B$, $\epsilon b = 0$. We will assume that $\epsilon$ 
is fixed.
By abuse of notation, we
put $0 \colonequals \epsilon$,
so that the convenient 
equality $a 0 = 0 b = 0$ holds
for all $a \in A$ and $b \in B$.
\end{defi}

\begin{defi}[Polynomial group]
By a \emph{polynomial} over $A$ 
we mean a formal sum
$\sum_{i \in \N} a_i x^i$, 
where $a_i \in A$,
for $i \in \mathbb{N}$,
and $a_i = 0$
for all but finitely many $i \in 
\mathbb{N}$. The set of 
polynomials over $A$ is denoted by $A[x]$.
We define $B[x]$ similarly and equip it with an abelian
group structure in the following way.
If $\sum_{i \in \N} b_i x^i,
\sum_{j \in \N} b'_j x^j \in B[x]$,
then we put
\[ \sum_{i \in \N} b_i x^i
+ 
\sum_{j \in \N} b_j' x^j 
\colonequals
\sum_{i \in \N}(b_i + b_i') x^i.\]
The zero polynomial is defined to be 
$0 \colonequals \sum_{i \in \N} 0 x^i$.
\end{defi}

\begin{defi}[$\pi$-maps]
From now on, 
let $\sigma_B$ and $\delta_B$
be group endomorphisms of $B$.
Take $i,j \in \mathbb{N}$.
We define $\pi_j^i \colon B \to B$ 
in the following way.
If $i \geq j$, then
we let
$\pi_j^i \colon B \to B$ denote
the sum of all $\binom{i}{j}$ compositions of $j$ instances of $\sigma_B$ and $i-j$ instances of $\delta_B$.
If $i < j$, then we put
$\pi_j^i \colonequals 0$. 
\end{defi}

\begin{defi}[Ore group extension]\label{def:oreextension}
The \emph{Ore group extension}
$B[x;\sigma_B,\delta_B]$ 
is the abelian group $B[x]$ 
with $A[x]$ as a set of operators,
the action being given by 
\[
\Biggl( \sum_{i \in \N} a_i x^i \Biggr)
\Biggl( \sum_{j \in \N} b_j x^j \Biggr)
\colonequals
\sum_{i,j,k \in \N} 
\left( a_i \pi^i_k(b_j) \right)
x^{k+j}
\]
for $\sum_{i \in \N} a_i x^i \in A[x]$ and 
$\sum_{j \in \N} b_j x^j \in B[x]$.
By abuse of notation, we write $B[x]$ for $
B[x;\id_B,0_B]$.
\end{defi}

\begin{prop}[Vandermonde's identity]\label{prop:Vandermonde}
$\sum_{i\in\mathbb{N}} 
\pi_i^k \circ \pi_{j-i}^n  =
\pi_j^{k+n}$ for $j,k,n \in \N$.
\end{prop}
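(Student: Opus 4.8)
The plan is to give a combinatorial proof: interpret each map $\pi_j^i$ as a sum over words of length $i$ in the two-letter alphabet $\{\sigma_B,\delta_B\}$ having exactly $j$ letters equal to $\sigma_B$, and observe that composition of such maps corresponds to concatenation of words. Vandermonde's identity then becomes the elementary statement that a word of length $k+n$ factors uniquely as a prefix of length $k$ followed by a suffix of length $n$ — the combinatorial content of the classical identity $\binom{k+n}{j}=\sum_i\binom{k}{i}\binom{n}{j-i}$ lifted to the level of operators.

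First I would fix notation. For $\ell\in\mathbb{N}$, call a tuple $w=(f_1,\dots,f_\ell)$ with each $f_t\in\{\sigma_B,\delta_B\}$ a \emph{word of length} $\ell$; write $|w|=\ell$ for its length, $\|w\|$ for the number of indices $t$ with $f_t=\sigma_B$, and $\Phi(w)\colonequals f_1\circ\cdots\circ f_\ell$ for the associated endomorphism of $B$ (with $\Phi$ of the empty word equal to $\id_B$). By the definition of the $\pi$-maps, $\pi_j^i=\sum_{|w|=i,\ \|w\|=j}\Phi(w)$, a sum over the $\binom{i}{j}$ words of length $i$ with exactly $j$ occurrences of $\sigma_B$; this sum is empty — hence $0$ — exactly when $i<j$, matching the convention $\pi_j^i=0$. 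The one structural fact I need is that composition is concatenation: writing $wv$ for the concatenation, associativity of $\circ$ gives $\Phi(wv)=\Phi(w)\circ\Phi(v)$, while plainly $|wv|=|w|+|v|$ and $\|wv\|=\|w\|+\|v\|$.

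The computation is then a matter of expanding and re-indexing. Since $\pi_i^k=0$ unless $0\le i\le k$ and $\pi_{j-i}^n=0$ unless $0\le j-i\le n$, the sum over $i\in\mathbb{N}$ has only finitely many nonzero terms and is well defined. Expanding each factor into its defining sum over words and distributing $\circ$ over these finite sums yields
\begin{align*}
\sum_{i\in\mathbb{N}}\pi_i^k\circ\pi_{j-i}^n
&=\sum_{i\in\mathbb{N}}\ \sum_{\substack{|w|=k,\ \|w\|=i\\ |v|=n,\ \|v\|=j-i}}\Phi(w)\circ\Phi(v)\\
&=\sum_{i\in\mathbb{N}}\ \sum_{\substack{|w|=k,\ \|w\|=i\\ |v|=n,\ \|v\|=j-i}}\Phi(wv).
\end{align*}
I would finish by noting that $(i,w,v)\mapsto wv$ is a bijection onto the set of words $u$ with $|u|=k+n$ and $\|u\|=j$: every such $u$ splits uniquely as $u=wv$ with $|w|=k$ and $|v|=n$, and then $i=\|w\|$ is forced and $\|v\|=j-i$. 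Hence the last double sum equals $\sum_{|u|=k+n,\ \|u\|=j}\Phi(u)=\pi_j^{k+n}$, as desired.

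The only point demanding care — and the reason a bare binomial-coefficient argument is insufficient — is that $\sigma_B$ and $\delta_B$ are not assumed to commute, so words cannot be collapsed into powers. The proof must therefore track the full ordered words, and the substance of the identity is exactly that the prefix/suffix factorization realizes the Vandermonde decomposition at the level of the operators themselves rather than merely of their multiplicities.
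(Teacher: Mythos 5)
Your proof is correct and follows essentially the same route as the paper's: both expand each $\pi$-map as a sum over words in $\{\sigma_B,\delta_B\}$ and identify the terms of $\pi_i^k\circ\pi_{j-i}^n$ with the words of length $k+n$ having $i$ copies of $\sigma_B$ among the first $k$ letters, which partition the words indexing $\pi_j^{k+n}$. Your prefix/suffix bijection is just a more explicitly formalized version of that partition argument.
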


\begin{proof}
Take $j,k,n \in \mathbb{N}$. 
Let $X$ denote the set of words of length
$k+n$ having as letters exactly $j$ 
copies of $\sigma_B$ and $k+n-j$
copies of $\delta_B$.
The sum $\pi_j^{k+n}$
equals the sum of all words from $X$,
where we interpret each word as the 
function corresponding to the 
composition of its letters.
For each $i$, let $X_i$ denote the 
set of words of length $k+n$ having exactly
$i$ copies of $\sigma_B$ amongst its first
$k$ letters, and exactly $j-i$
copies of $\sigma_B$ amongst its remaining
$n$ letters.
Fix $i$. Expanding the sums $\pi_i^k$ and
$\pi_{j-i}^n$ it follows that 
the terms in the resulting 
sum $\pi_i^k \circ \pi_{j-i}^n$ 
are in bijective correspondence with the
words in $X_i$.
Since, clearly, $\{ X_i \}_{i \in \mathbb{N}}$
is a partition of $X$, the sought identity
follows.
\end{proof}

\begin{cor}\label{cor:oneshift}
$\pi_{j-1}^k \circ \sigma_B 
+ \pi_j^k \circ \delta_B = \pi_j^{k+1}$
for  $j,k \in \N$. 
\end{cor}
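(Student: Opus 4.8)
The plan is to deduce this directly from Vandermonde's identity (Proposition~\ref{prop:Vandermonde}) by specializing the parameter $n$ to $1$. Setting $n = 1$ there yields
\[
\pi_j^{k+1} = \sum_{i \in \N} \pi_i^k \circ \pi_{j-i}^1,
\]
so the whole task reduces to identifying which summands on the right-hand side are nonzero and evaluating them.

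First I would compute the basic maps $\pi_m^1$ appearing as the right-hand factors. By the definition of the $\pi$-maps, $\pi_m^1 = 0$ whenever $m > 1$ (and likewise whenever $m < 0$, interpreting a negative subscript as an empty sum of compositions, consistent with the vanishing of the sets $X_i$ in the proof of Proposition~\ref{prop:Vandermonde}). For $m \in \{0,1\}$ one reads off $\pi_0^1 = \delta_B$, since it is the single composition built from zero copies of $\sigma_B$ and one copy of $\delta_B$, and similarly $\pi_1^1 = \sigma_B$. Consequently $\pi_{j-i}^1 \neq 0$ forces $j - i \in \{0,1\}$, that is, $i \in \{j-1, j\}$.

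Next I would collect the two surviving terms. Taking $i = j$ gives $\pi_j^k \circ \pi_0^1 = \pi_j^k \circ \delta_B$, and taking $i = j-1$ gives $\pi_{j-1}^k \circ \pi_1^1 = \pi_{j-1}^k \circ \sigma_B$; all remaining summands vanish. Substituting these back produces exactly $\pi_j^{k+1} = \pi_{j-1}^k \circ \sigma_B + \pi_j^k \circ \delta_B$, as claimed.

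I do not expect any genuine obstacle, since the argument is a pure specialization; the only point requiring a moment's care is the boundary case $j = 0$, where the index $i = j - 1 = -1$ must not contribute. Here one checks that the formula degenerates correctly: the term $\pi_{-1}^k \circ \sigma_B$ is zero, leaving $\pi_0^{k+1} = \pi_0^k \circ \delta_B$, which is consistent because both sides equal the $(k+1)$-fold composite of $\delta_B$. With this convention in place, the specialization of Vandermonde's identity delivers the result uniformly in $j$ and $k$.
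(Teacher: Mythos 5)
Your proposal is correct and follows exactly the paper's route: the paper proves the corollary by the one-line observation that it is Proposition~\ref{prop:Vandermonde} with $n=1$, and your write-up simply makes explicit the evaluations $\pi_0^1=\delta_B$, $\pi_1^1=\sigma_B$, the vanishing of the other summands, and the boundary case $j=0$. No issues.
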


\begin{proof}
This follows from Proposition~\ref{prop:Vandermonde}
with $n = 1$.
\end{proof}

\begin{defi}[$\sigma$-derivation and $\sigma$-twist]
Suppose 
from now on
that $\sigma_A$ and $\delta_A$
are maps $A \to A$,
and that $\sigma_B$ and $\delta_B$
are additive maps $B \to B$.
We say that $\delta_B$ is a 
\emph{$\delta_A$-twisted $\sigma_A$-derivation} 
if $\delta_B(ab)= \sigma_A(a)\delta_B(b)
+\delta_A(a)b$
for all $a \in A$
and $b \in B$.
Additionally, we say that
$\sigma_B$ is \emph{$\sigma_A$-twisted} 
if $\sigma_B(ab) = \sigma_A(a) \sigma_B(b)$
for all $a \in A$ and $b \in B$.
\end{defi}

\begin{prop}[Leibniz's identity]\label{prop:Leibniz}
Suppose that $\delta_B$ is a $\delta_A$-twisted 
$\sigma_A$-derivation and 
$\sigma_B$ is $\sigma_A$-twisted. 
Take $a \in A$, $b \in B$ and $i,m \in \N$.
Then $\pi_i^m(ab) = 
\sum_{k\in\mathbb{N}} \pi_k^m(a) \pi_i^k(b).$
\end{prop}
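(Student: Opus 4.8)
The plan is to prove the identity by induction on $m$, peeling off the innermost factor on the $B$-side with Corollary~\ref{cor:oneshift} and then invoking the two twist relations $\sigma_B(ab)=\sigma_A(a)\sigma_B(b)$ and $\delta_B(ab)=\sigma_A(a)\delta_B(b)+\delta_A(a)b$. Throughout, I read $\pi_k^m(a)$ as the formal sum, over the $\binom{m}{k}$ words $u$ of length $m$ in $\sigma_A$ and $\delta_A$ having exactly $k$ occurrences of $\sigma_A$, of the elements $u(a)\in A$; correspondingly the product $\pi_k^m(a)\,\pi_i^k(b)$ means $\sum_u u(a)\bigl(\pi_i^k(b)\bigr)\in B$, which is well defined because each element of $A$ acts as a group endomorphism of $B$ (so the action is additive in its $B$-variable) and $\pi_i^k(b)\in B$. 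For the base case $m=0$ one has $\pi_0^0=\id$ and $\pi_j^0=0$ for $j>0$, so both sides equal $ab$ when $i=0$ and $0$ when $i>0$.

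For the inductive step, assume the identity for $m$ and all $i$ and all arguments. By Corollary~\ref{cor:oneshift} with superscript $m$ we have $\pi_i^{m+1}=\pi_{i-1}^m\circ\sigma_B+\pi_i^m\circ\delta_B$, hence
\[
\pi_i^{m+1}(ab)=\pi_{i-1}^m\bigl(\sigma_B(ab)\bigr)+\pi_i^m\bigl(\delta_B(ab)\bigr).
\]
Substituting the twist relations and using that each $\pi_j^m$ is additive (being a sum of composites of the additive maps $\sigma_B,\delta_B$), the right-hand side splits into the three terms $\pi_{i-1}^m\bigl(\sigma_A(a)\sigma_B(b)\bigr)$, $\pi_i^m\bigl(\sigma_A(a)\delta_B(b)\bigr)$ and $\pi_i^m\bigl(\delta_A(a)\,b\bigr)$. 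Applying the induction hypothesis to each, with $(\sigma_A(a),\sigma_B(b))$, $(\sigma_A(a),\delta_B(b))$ and $(\delta_A(a),b)$ playing the role of $(a,b)$, rewrites $\pi_i^{m+1}(ab)$ as a triple sum over $k$.

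It then remains to match this triple sum against the target $\sum_k\pi_k^{m+1}(a)\,\pi_i^k(b)$, which is where the real work lies. Two reductions do it. First, for each $k$ the two terms carrying the common factor $\pi_k^m(\sigma_A(a))$ combine through $\pi_{i-1}^k(\sigma_B(b))+\pi_i^k(\delta_B(b))=\pi_i^{k+1}(b)$---again Corollary~\ref{cor:oneshift}, now with superscript $k$---leaving $\sum_k\pi_k^m(\sigma_A(a))\,\pi_i^{k+1}(b)$ together with $\sum_k\pi_k^m(\delta_A(a))\,\pi_i^k(b)$. Second, the same word-partitioning argument that proves Corollary~\ref{cor:oneshift}, applied now to the length-$(m+1)$ words in $\sigma_A,\delta_A$, yields $\pi_k^{m+1}(a)=\pi_{k-1}^m(\sigma_A(a))+\pi_k^m(\delta_A(a))$ as an identity of formal operator sums; expanding the target accordingly and reindexing $k\mapsto k+1$ in the $\sigma_A$-part reproduces exactly the two sums just obtained, closing the induction. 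I expect the main obstacle to be precisely this bookkeeping: justifying that $\pi_k^m(a)$ may be handled as a formal sum of operator words, tracking the $A$-side and $B$-side indices separately, and arranging the reindexing so that Corollary~\ref{cor:oneshift} can legitimately be invoked a second time to recombine the surviving terms.

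A conceptually cleaner, though notationally heavier, alternative avoids the induction entirely. For each length-$m$ word $w$ in $\sigma_B,\delta_B$ with $i$ copies of $\sigma_B$, expand $w(ab)$ by applying the twist relations from the inside out: a $\sigma$-letter contributes $\sigma_A$ to the accumulated $A$-word and $\sigma_B$ to the $B$-word, whereas a $\delta$-letter branches into a ``kept'' contribution ($\sigma_A$ on the $A$-word, $\delta_B$ on the $B$-word) and a ``dropped'' one ($\delta_A$ on the $A$-word, nothing on the $B$-word). The resulting terms are then in bijection with pairs consisting of a length-$m$ word in $\sigma_A,\delta_A$ with $k$ copies of $\sigma_A$ and a length-$k$ word in $\sigma_B,\delta_B$ with $i$ copies of $\sigma_B$, and regrouping by $k$ gives exactly $\sum_k\pi_k^m(a)\,\pi_i^k(b)$.
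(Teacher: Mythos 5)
Your proof is correct and follows essentially the same route as the paper: induction on $m$, peeling off the innermost letter via Corollary~\ref{cor:oneshift}, substituting the two twist relations, applying the induction hypothesis to the three resulting terms, recombining two of them with Corollary~\ref{cor:oneshift} applied at superscript $k$, and finishing with the $A$-side analogue $\pi_k^{m+1}(a)=\pi_{k-1}^m(\sigma_A(a))+\pi_k^m(\delta_A(a))$ after reindexing. Your explicit remarks on reading $\pi_k^m(a)\,\pi_i^k(b)$ as a formal sum of operator words acting on $B$ only make precise what the paper uses implicitly.
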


\begin{proof}
We prove the sought identity by
induction on $m$. It clearly holds for $m=0$.
Suppose that it holds
for some $m \in \N$.
Using Corollary~\ref{cor:oneshift}, we get that
\[
\begin{array}{rcl}
 \pi_i^{m+1}(ab) 
& = & 
\pi_{i-1}^m (\sigma_B(ab) ) + 
\pi_i^m( \delta_B(ab) ) \\
& = & 
\pi_{i-1}^m (\sigma_A(a)\sigma_B(b) ) + 
\pi_i^m( \sigma_A(a)\delta_B(b) ) +
\pi_i^m( \delta_A(a) b ) \\
& = & 
\sum_{k\in\mathbb{N}} \pi_k^m(\sigma_A(a)) 
\pi_{i-1}^k(\sigma_B(b)) + 
\sum_{k\in\mathbb{N}} \pi_k^m(\sigma_A(a)) 
\pi_i^k(\delta_B(b)) \\
&  & + \
\sum_{k\in\mathbb{N}} \pi_k^m(\delta_A(a)) \pi_i^k(b) \\
& = & 
\pi_{i-1}^m(\sigma_A(a)) \sigma_B^i(b) 
+ 
\sum_{k=i}^m \left[ 
\pi_k^m(\sigma_A(a)) 
\pi_{i-1}^k(\sigma_B(b)) +
\pi_k^m(\sigma_A(a)) 
\pi_i^k(\delta_B(b)) 
\right] \\
&  & 
+ \
\sum_{k\in\mathbb{N}} \pi_k^m(\delta_A(a)) \pi_i^k(b) \\
&=& 
\pi_{i-1}^m(\sigma_A(a)) \sigma_B^i(b) +
\sum_{k=i}^m \pi_k^m(\sigma_A(a)) 
\pi_i^{k+1}(b)
+ \sum_{k\in\mathbb{N}} 
\pi_k^m(\delta_A(a)) \pi_i^k(b) \\
&=& 
\sum_{k=i}^{m+1} 
\pi_{k-1}^m(\sigma_A(a)) \pi_i^k(b) + 
\sum_{k=i}^{m+1} \pi_k^m(\delta_A(a)) \pi_i^k(b) 
=\sum_{k=i}^{m+1} \pi_k^{m+1}(a) \pi_i^k(b).
\end{array}
\]
\end{proof}

\begin{prop}\label{prop:mixed}
Suppose that $\delta_B$ is a $\delta_A$-twisted
$\sigma_A$-derivation and $\sigma_B$ is
$\sigma_A$-twisted. Take $a \in A$,
$b \in B$ and $j,m,n\in\mathbb{N}$.
Then $\sum_{i \in \N}
\pi_i^m(a \pi_{j-i}^n(b)) =
\sum_{i \in \N}
\pi_i^m(a) \pi_j^{i+n}(b).$
\end{prop}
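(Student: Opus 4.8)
The plan is to derive Proposition~\ref{prop:mixed} purely formally by combining Leibniz's identity (Proposition~\ref{prop:Leibniz}) with Vandermonde's identity (Proposition~\ref{prop:Vandermonde}); no genuinely new computation is needed beyond careful bookkeeping of finite sums.

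First I would rewrite the left-hand side. For each fixed $i$, the element $\pi_{j-i}^n(b)$ lies in $B$, so Leibniz's identity applies to $\pi_i^m\bigl(a\,\pi_{j-i}^n(b)\bigr)$ with $a \in A$ and $\pi_{j-i}^n(b) \in B$ playing the roles of its two arguments. This gives
\[
\pi_i^m\bigl(a\,\pi_{j-i}^n(b)\bigr) = \sum_{k \in \N} \pi_k^m(a)\,\pi_i^k\bigl(\pi_{j-i}^n(b)\bigr),
\]
and summing over $i$ turns the left-hand side into the double sum $\sum_{i \in \N}\sum_{k \in \N} \pi_k^m(a)\,\pi_i^k\bigl(\pi_{j-i}^n(b)\bigr)$. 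Next I would interchange the two summations and, for each fixed $k$, factor out $\pi_k^m(a)$. The inner sum then reads $\sum_{i \in \N} \pi_i^k\bigl(\pi_{j-i}^n(b)\bigr) = \bigl(\sum_{i \in \N} \pi_i^k \circ \pi_{j-i}^n\bigr)(b)$, which by Vandermonde's identity equals $\pi_j^{k+n}(b)$. Substituting this back yields $\sum_{k \in \N} \pi_k^m(a)\,\pi_j^{k+n}(b)$, which is exactly the right-hand side after renaming $k$ to $i$.

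I do not expect a substantial obstacle here; the statement is a compatibility relation sitting ``one level up'' from the two identities it rests on. The only points requiring minor care are (i) verifying that the double sum is finite so that the interchange of summation order is valid—indeed $\pi_k^m(a)$ vanishes unless $0 \le k \le m$, $\pi_i^k$ vanishes unless $i \le k$, and $\pi_{j-i}^n$ vanishes unless $0 \le j-i \le n$, so only finitely many pairs $(i,k)$ contribute; (ii) observing that factoring $\pi_k^m(a)$ out of the inner sum uses only that the operators in $A$ act as group endomorphisms of $B$, hence additively; and (iii) keeping the convention $\pi_{j-i}^n = 0$ for $i > j$ in force so that terms with negative lower index silently drop out. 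All three are already built into the earlier definitions and proofs, so the argument should be a short chain of substitutions.
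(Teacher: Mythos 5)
Your proposal is correct and follows essentially the same route as the paper's proof: apply Leibniz's identity termwise, interchange the (finite) double sum, factor out $\pi_k^m(a)$ using additivity of the action, and collapse the inner sum via Vandermonde's identity. The extra remarks on finiteness of the double sum and the vanishing conventions are justifications the paper leaves implicit but are entirely consistent with its argument.
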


\begin{proof}
By Leibniz's and Vandermonde's 
identities, we get that
\[
\begin{array}{rcl}
\sum_{i\in\mathbb{N}}
\pi_i^m(a \pi_{j-i}^n(b))  
 &=& \sum_{i\in\mathbb{N}} \sum_{k \in\N}
\pi_k^m(a) 
(\pi_i^k \circ \pi_{j-i}^n)(b) =  %\\
\sum_{k\in\mathbb{N}} \sum_{i \in \N} \pi_k^m(a)
(\pi_i^k \circ \pi_{j-i}^n)(b) \\
& = & 
\sum_{k\in\mathbb{N}} \pi_k^m(a) \sum_{i\in\mathbb{N}} 
(\pi_i^k \circ \pi_{j-i}^n)(b) 
= \sum_{k \in \N } 
\pi_k^m(a) \pi_j^{k+n}(b). 
\end{array} 
\]
\end{proof}

\begin{defi}[Associativity]\label{def:assoc}
From now on $C$ is an abelian 
group with operators in $A$ 
and in $B$. We say that 
$(A,B,C)$ is \emph{associative} 
if $(ab)c = a(bc)$
for all $a \in A$, 
$b \in B$ and $c \in C$.
\end{defi}

Henceforth, let $\sigma_C$ and $\delta_C$ be additive maps $C \to C$.
Note that the Ore group extension $C[x;\sigma_C, \delta_C]$ is an abelian group with operators in $A[x]$ and in $B[x]$, under our stated conventions on $A$, $B$, and $C$. Also note that $C[x;\sigma_C, \delta_C]$ can 
be seen as an abelian group with operators in
either of the sets $B[x;\sigma_B,\delta_B]$ 
or $B[x]$.

\begin{thm}\label{thm:MimpliesMx}
Let $(A,B,C)$ be associative. 
Suppose that $\delta_C$ is a 
$\delta_B$-twisted
$\sigma_B$-derivation and $\sigma_C$
is $\sigma_B$-twisted. Then 
$(A[x],B[x;\sigma_B,\delta_B],
C[x;\sigma_C,\delta_C])$ is 
associative.
\end{thm}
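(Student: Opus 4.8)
The plan is to verify the defining equality $(pq)r = p(qr)$ for all $p \in A[x]$, $q \in B[x;\sigma_B,\delta_B]$, and $r \in C[x;\sigma_C,\delta_C]$. Since each of the relevant actions is additive in its group argument and distributes over the formal‑sum decomposition of its operator argument, I would first reduce, by biadditivity, to the monomial case $p = a x^\ell$, $q = b x^m$, $r = c x^n$ with $a \in A$, $b \in B$, $c \in C$, and $\ell,m,n \in \N$. To keep the two layers of twisting apart, I would write $\pi^i_j$ for the maps built from $\sigma_B,\delta_B$ (as in the definition) and $\rho^i_j$ for the analogously defined maps built from $\sigma_C,\delta_C$; the action of $A[x]$ on $B[x]$ is then governed by the $\pi$-maps, while the actions of $A[x]$ and of $B[x;\sigma_B,\delta_B]$ on $C[x;\sigma_C,\delta_C]$ are governed by the $\rho$-maps.

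Next I would compute the two sides explicitly. On the left, $(a x^\ell)(b x^m) = \sum_{k \in \N} \bigl( a\,\pi^\ell_k(b) \bigr) x^{k+m}$, and acting on $c x^n$ yields
\[
\bigl( (a x^\ell)(b x^m) \bigr)(c x^n) = \sum_{k,t \in \N} \bigl( (a\,\pi^\ell_k(b))\,\rho^{k+m}_t(c) \bigr) x^{t+n}.
\]
On the right, $(b x^m)(c x^n) = \sum_{s \in \N} \bigl( b\,\rho^m_s(c) \bigr) x^{s+n}$, and acting by $a x^\ell$ gives
\[
(a x^\ell)\bigl( (b x^m)(c x^n) \bigr) = \sum_{s,u \in \N} \bigl( a\,\rho^\ell_u(b\,\rho^m_s(c)) \bigr) x^{u+s+n}.
\]
Comparing the coefficients of a fixed power of $x$ and abbreviating the resulting total index by $j$, the claim reduces to the identity $\sum_{k \in \N} (a\,\pi^\ell_k(b))\,\rho^{k+m}_j(c) = \sum_{u+s=j} a\,\rho^\ell_u(b\,\rho^m_s(c))$ for all $a,b,c$ and all $\ell,m,j \in \N$.

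I would then strip off the outer action of $a$. On the left, the associativity of $(A,B,C)$ gives $(a\,\pi^\ell_k(b))\,\rho^{k+m}_j(c) = a\,\bigl( \pi^\ell_k(b)\,\rho^{k+m}_j(c) \bigr)$, and since $a$ acts as an endomorphism of $C$ it may be pulled outside the finite sum; on the right $a$ already stands in front of each summand and factors out likewise. Hence it suffices to prove the purely $\pi$-$\rho$ identity
\[
\sum_{k \in \N} \pi^\ell_k(b)\,\rho^{k+m}_j(c) = \sum_{u+s=j} \rho^\ell_u(b\,\rho^m_s(c)),
\]
in which $a$ no longer appears.

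The heart of the matter is that this last identity is precisely Proposition~\ref{prop:mixed} transported from the pair $(A,B)$ to the pair $(B,C)$: here $b \in B$ plays the former role of $a \in A$, $c \in C$ plays that of $b \in B$, and the $\rho$-maps play the role of the $\pi$-maps on the acted‑upon group. Crucially, the hypotheses needed to run Proposition~\ref{prop:mixed}, and underneath it Leibniz's identity (Proposition~\ref{prop:Leibniz}) and Vandermonde's identity (Proposition~\ref{prop:Vandermonde}), in this transported setting are exactly that $\delta_C$ be a $\delta_B$-twisted $\sigma_B$-derivation and that $\sigma_C$ be $\sigma_B$-twisted, which are the standing assumptions of the theorem. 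After re-indexing $s = j-u$, the right-hand side equals $\sum_u \rho^\ell_u(b\,\rho^m_{j-u}(c))$, and the transported Proposition~\ref{prop:mixed}, with its parameters specialized to $\ell$ and $m$, rewrites this as $\sum_k \pi^\ell_k(b)\,\rho^{k+m}_j(c)$, closing the argument. The main obstacle I anticipate is bookkeeping: keeping the two families of twisted maps $\pi$ and $\rho$ distinct, tracking the several summation indices through the two nested actions, and checking that the hypotheses transport cleanly so that Proposition~\ref{prop:mixed} may legitimately be invoked for the pair $(B,C)$ rather than the pair $(A,B)$ on which it was proved.
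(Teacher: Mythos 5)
Your proposal is correct and takes essentially the same route as the paper's proof: expand the triple products, use the associativity of $(A,B,C)$ together with the fact that each $a \in A$ acts as an endomorphism of $C$ to pull $a$ outside the inner sum, and then invoke Proposition~\ref{prop:mixed} transported from the pair $(A,B)$ to the pair $(B,C)$, whose hypotheses are precisely the theorem's assumptions on $\sigma_C$ and $\delta_C$. The only differences are cosmetic: you reduce to monomials and introduce a separate symbol $\rho$ for the maps built from $\sigma_C,\delta_C$, whereas the paper manipulates the full sums and overloads the symbol $\pi$, but both arguments rest on the same term-by-term expansion and the same key identity.
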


\begin{proof}
Let $\alpha \colonequals
\sum_{i \in \N} a_i x^i \in A[x]$,
$\beta \colonequals
\sum_{j \in \N} b_j x^j \in B[x]$,
and 
$\gamma \colonequals
\sum_{k \in \N} c_k x^k \in C[x]$.
By Proposition~\ref{prop:mixed}, and 
the associativity of $(A,B,C)$,
we get that
\[
\begin{array}{rcl}
(\alpha \beta)\gamma 
& = &
\sum_{i,j,p \in \N} 
a_i \pi_p^i(b_j) x^{p+j}
\sum_{k \in \N} c_k x^k %\\
= \sum_{i,j,k,p,q \in \N}
\left( 
a_i \pi_p^i(b_j)
\right) \pi_q^{p+j}(c_k) x^{q+k} \\
& = &
\sum_{i,j,k,p,q \in \N}
 a_i \left( \pi_p^i(b_j)
\pi_q^{p+j}(c_k) \right) x^{q+k} %\\
= \sum_{i,j,k,q \in \N}
 a_i \left(\sum_{p \in \N} \pi_p^i(b_j)
\pi_q^{p+j}(c_k) \right)  x^{q+k} \\
& = & 
\sum_{i,j,k,q \in \N}
 a_i \left(\sum_{p \in \N} \pi_p^i(b_j
\pi_{q-p}^{j}(c_k))\right)  x^{q+k} 
\quad 
\mbox{[Put $r \colonequals q-p$]} \\
& = & 
\sum_{i,j,k,r \in \N}
 a_i \sum_{p \in \N} \pi_p^i(b_j
\pi_r^j(c_k))  x^{p+r+k} %\\
=\sum_{i,j,k,r,p \in \N}
a_i \left( \pi_p^i(b_j
\pi_r^j(c_k)) \right)  x^{p+r+k} \\
& = &  
\alpha \sum_{j,k,r \in \N}
b_j \pi_r^j(c_k) x^{r+k} 
 =  \alpha (\beta \gamma).
\end{array}
\]
\end{proof}

In analogy with the situation for modules, 
we define the \emph{annihilator of $A$ in $C$}
to be the set 
$\Ann_C(A) \colonequals \{ c \in C \mid 
Ac = \{ 0 \} \}$.

\begin{cor}\label{cor:assocEquiv}
Let $\sigma_C$ and 
$\delta_C$ be group endomorphisms of $C$. 
Suppose that $\Ann_C(A) = \{ 0 \}$.
Then the following assertions are equivalent:

\begin{enumerate}[{\rm (i)}]
\item $(A[x],B[x;\sigma_B,\delta_B],
C[x;\sigma_C,\delta_C])$ is 
associative;

\item $(A,B,C)$ is associative,
$\sigma_C$ is $\sigma_B$-twisted and
$\delta_C$ is a $\delta_B$-twisted
$\sigma_B$-derivation.
\end{enumerate}
\end{cor}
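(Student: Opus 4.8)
The implication (ii)$\Rightarrow$(i) is precisely the content of Theorem~\ref{thm:MimpliesMx}, so the only real work lies in (i)$\Rightarrow$(ii). The plan is to feed carefully chosen low-degree polynomials into the associativity relation $(\alpha\beta)\gamma = \alpha(\beta\gamma)$, read off identities by comparing coefficients of like powers of $x$, and then invoke the hypothesis $\Ann_C(A) = \{0\}$ at the very end to strip away a leading operator from $A$.

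First I would take $\alpha = a$, $\beta = b$, $\gamma = c$ with $a\in A$, $b \in B$, $c\in C$, all regarded as constant (degree-$0$) polynomials. Since $\pi_0^0 = \id$ and a degree-$0$ operator acts coefficientwise, both $(\alpha\beta)\gamma$ and $\alpha(\beta\gamma)$ collapse to their $x^0$-coefficients $(ab)c$ and $a(bc)$. Equating them for all $a,b,c$ shows that $(A,B,C)$ is associative, the first assertion in (ii); note that this step does not use the annihilator hypothesis.

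The heart of the argument is a second evaluation: take $\alpha = ax$, $\beta = b$, $\gamma = c$. Computing $\alpha\beta$ in $B[x;\sigma_B,\delta_B]$ gives $a\delta_B(b) + a\sigma_B(b)\,x$ (as $\pi_0^1 = \delta_B$ and $\pi_1^1 = \sigma_B$), and letting this act on the constant $c$ in $C[x;\sigma_C,\delta_C]$, where now $\pi_0^1 = \delta_C$ and $\pi_1^1 = \sigma_C$, produces
\[
(\alpha\beta)\gamma = \bigl[(a\delta_B(b))c + (a\sigma_B(b))\delta_C(c)\bigr] + \bigl[(a\sigma_B(b))\sigma_C(c)\bigr]x.
\]
On the other side, $\beta\gamma = bc$ and the degree-$1$ operator $\alpha$ acting on it gives $\alpha(\beta\gamma) = a\delta_C(bc) + a\sigma_C(bc)\,x$. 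Comparing the coefficients of $x^1$ and of $x^0$ yields, respectively,
\[
(a\sigma_B(b))\sigma_C(c) = a\sigma_C(bc), \qquad (a\delta_B(b))c + (a\sigma_B(b))\delta_C(c) = a\delta_C(bc).
\]

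Finally I would invoke the associativity of $(A,B,C)$ already established to rewrite each product $(ab')c'$ with $b'\in B$, $c'\in C$ as $a(b'c')$, and then use that elements of $A$ act additively on $C$. The $x^1$-relation becomes $a\bigl(\sigma_B(b)\sigma_C(c) - \sigma_C(bc)\bigr) = 0$ and the $x^0$-relation becomes $a\bigl(\sigma_B(b)\delta_C(c) + \delta_B(b)c - \delta_C(bc)\bigr) = 0$, each holding for every $a \in A$ with $b,c$ fixed. Thus the two bracketed elements of $C$ lie in $\Ann_C(A) = \{0\}$ and hence vanish, giving exactly $\sigma_C(bc) = \sigma_B(b)\sigma_C(c)$ and $\delta_C(bc) = \sigma_B(b)\delta_C(c) + \delta_B(b)c$ for all $b,c$; that is, $\sigma_C$ is $\sigma_B$-twisted and $\delta_C$ is a $\delta_B$-twisted $\sigma_B$-derivation. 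I expect the only obstacle to be bookkeeping: one must keep straight that the $\pi$-maps in the $A[x]$-on-$B[x]$ action are built from $\sigma_B,\delta_B$, whereas those in the $B[x]$-on-$C[x]$ and $A[x]$-on-$C[x]$ actions are built from $\sigma_C,\delta_C$, and recognize that the annihilator hypothesis is precisely what licenses cancelling the outer $a$ once associativity of $(A,B,C)$ has moved it outside the products.
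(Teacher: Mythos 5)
Your proposal is correct and follows essentially the same route as the paper: (ii)$\Rightarrow$(i) via Theorem~\ref{thm:MimpliesMx}, and for (i)$\Rightarrow$(ii) the constant-polynomial case gives associativity of $(A,B,C)$, while comparing coefficients in $(ax)(bc)=((ax)b)c$ and using $\Ann_C(A)=\{0\}$ yields the twisting and derivation conditions. You merely spell out more explicitly the step where associativity of $(A,B,C)$ and additivity of the $A$-action move the outer $a$ outside before invoking the annihilator hypothesis, which the paper leaves implicit.
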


\begin{proof} 
The implication 
(ii)$\Rightarrow$(i)
follows from 
Theorem~\ref{thm:MimpliesMx}.
Now we show that
(i)$\Rightarrow$(ii).
Suppose that
$(A[x],B[x;\sigma_B,\delta_B],
C[x;\sigma_C,\delta_C])$ is 
associative.
Then, trivially, $(A,B,C)$ is 
associative.
Take $a \in A$, 
$b \in B$ and $c \in C$.
From $(ax)(bc)=((ax)b)c$ 
we get that
$a \sigma_C(bc) = a \sigma_B(b)\sigma_C(c)$
and $a \delta_C(bc)= 
a\sigma_B(b)\delta_C(c) + a\delta_B(b)c$.
Thus, both $\sigma_C(bc)-\sigma_B(b)\sigma_C(c)$
and 
$\delta_C(bc) - \sigma_B(b)\delta_C(c)
- \delta_B(b)c$ belong to $\Ann_C(A) = \{ 0 \}$.
Therefore, $\sigma_C$ is $\sigma_B$-twisted and
$\delta_C$ is a $\delta_B$-twisted
$\sigma_B$-derivation.
\end{proof}

\begin{ex}\label{ex:associative}
Suppose that $F$ is a field and $\sigma_F$ 
is a field endomorphism of $F$.
Let $V$ be an $F$-vector space with a 
fixed basis $(v_i)_{i \in I}$.
Let $\sigma_F$ act on $V$ in the following way.
Suppose that $v = \sum_{i \in I} f_i v_i$,
for some $f_i \in F$ with $f_i = 0$
for all but finitely many $i \in I$.
Put $\sigma_F(v) \colonequals \sum_{i \in I} 
\sigma_F(f_i) v_i$.
Let $\alpha$ be an $F$-vector space endomorphism of $V$.
Put $\sigma_V \colonequals \sigma_F \circ \alpha$.
Then $\sigma_V$ is a $\sigma_F$-twisted
endomorphism of $V$. Let $\delta_F\colon F \to F$ 
be any $\delta_F$-twisted $\sigma_F$-derivation 
on $F$, for instance $\delta_F \colonequals \id_F - \sigma_F$.
Define $\delta_V \colon V \to V$ by
$\delta_V(v) \colonequals \sum_{i \in I} \delta_F(f_i)v_i$,
for $v \in V$.
Then $\delta_V$ is a $\delta_F$-twisted 
$\sigma_F$-derivation on $V$.
By Corollary~\ref{cor:assocEquiv},
the triple $(F[x],F[x;\sigma_F,\delta_F], 
V[x;\sigma_V,\delta_V])$ is associative.
\end{ex}

\section{A Hilbert's basis theorem}\label{sec:hilbertsbasistheorem}

Throughout this section, 
$B$ is an abelian group 
with operators in a nonempty set $A$,
$+$ denotes the group operation in $B$,
and $0$ denotes the identity element of~$B$.
We also assume that $A$ has a zero element
(see Definition~\ref{defi:zeroelement}) and that 
$B[x;\sigma_B,\delta_B]$ is an Ore group
extension. As is customary,
given $z \in \Z$ and $b \in B$ we write
\[
zb \colonequals \left\{
\begin{array}{cccc}
b + \cdots + b \quad &\mbox{($z$ terms)}
& \mbox{if} & z>0, \\
0  && \mbox{if} & z=0, \\
(-b) + \cdots + (-b) \quad 
& \mbox{($-z$ terms)}
& \mbox{if} & z<0.
\end{array}
\right.
\]
Let $S$ be a subset of $B$. 
We let $\Z S$
denote the set of finite sums of
elements of the form $zs$ for 
$z \in \Z$ and $s \in S$.
Let $\{ S_i \}_{i \in I}$ be a family
of subsets of $B$. We let 
$\sum_{i \in I} S_i$ denote the 
set of finite sums of elements 
from $\cup_{i \in I} S_i$.
From Proposition~\ref{prop:generated} and
Proposition~\ref{prop:weak-s-unital-action},
we immediately get the following:

\begin{prop}\label{prop:abeliangenerated}
Let $S$ be a nonempty subset of $B$.
Then $\langle S \rangle =
\sum_{n \in \N} \Z (A^n S)$ and 
$[S] = \sum_{n \in \N_+} \Z (A^n S)$.
The action of $A$ on $B$
is weakly $s$-unital if and only if
for every nonempty subset $S$ of $B$
the equality $\langle S \rangle =
\sum_{n \in \N_+} \Z (A^n S)$ holds.
\end{prop}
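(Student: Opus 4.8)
The plan is to rewrite the two generating descriptions already available---Proposition~\ref{prop:generated} for $\langle S \rangle$ and the defining description of $[S]$ in Definition~\ref{def:sunital}---in the additive notation appropriate to the abelian group $B$, and then to identify the resulting subgroup closures as $\Z$-spans. Throughout, a ``product'' $b_1 \cdots b_n$ becomes a finite sum $b_1 + \cdots + b_n$, and the condition ``$b_k \in T$ or $b_k^{-1} \in T$'' becomes ``$b_k \in T$ or $-b_k \in T$''.

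For the first equality, I would apply Proposition~\ref{prop:generated}: $\langle S \rangle$ is the set of all finite sums $\sum_{k=1}^n b_k$ in which each summand satisfies $b_k \in \overline{S}$ or $-b_k \in \overline{S}$. Since $B$ is abelian, collecting repeated summands shows that this set coincides with $\Z \overline{S}$, the set of finite integer combinations of elements of $\overline{S}$; conversely, each integer combination $\sum_i z_i s_i$ with $s_i \in \overline{S}$ expands into a sum of $\pm s_i$'s of the required form. Next, Proposition~\ref{prop:Sclosure} gives $\overline{S} = \cup_{n \in \N}(A^n S)$, and since every element of $\overline{S}$ lies in some $A^n S$, one obtains $\Z \overline{S} = \sum_{n \in \N} \Z(A^n S)$. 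Combining these yields $\langle S \rangle = \sum_{n \in \N} \Z(A^n S)$.

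The second equality is established by the same two steps with $\widetilde{S}$ in place of $\overline{S}$: the defining description of $[S]$ in Definition~\ref{def:sunital} gives $[S] = \Z \widetilde{S}$ by the identical abelian collecting argument, and $\widetilde{S} = \cup_{n \in \N_+}(A^n S)$ gives $\Z \widetilde{S} = \sum_{n \in \N_+}\Z(A^n S)$, whence $[S] = \sum_{n \in \N_+}\Z(A^n S)$.

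Finally, the stated equivalence is obtained by feeding these two computations into Proposition~\ref{prop:weak-s-unital-action}: the action of $A$ on $B$ is weakly $s$-unital if and only if $\langle S \rangle = [S]$ for every nonempty $S$, and by the second equality the right-hand side equals $\sum_{n \in \N_+}\Z(A^n S)$, which is exactly the asserted criterion. I expect no serious obstacle; the only point requiring a little care is the bookkeeping identity $\Z\bigl(\cup_n A^n S\bigr) = \sum_n \Z(A^n S)$, that is, that the $\Z$-span of a union equals the sum of the individual $\Z$-spans, which is immediate once one notes that any finite sum involves only finitely many of the sets $A^n S$.
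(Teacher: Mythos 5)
Your proposal is correct and follows exactly the route the paper intends: the paper states the result as an immediate consequence of Proposition~\ref{prop:generated} (together with Proposition~\ref{prop:Sclosure}) and Proposition~\ref{prop:weak-s-unital-action}, and your write-up simply supplies the additive-notation translation and the bookkeeping identity $\Z\bigl(\cup_n A^n S\bigr)=\sum_n \Z(A^n S)$ that the authors leave implicit.
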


\begin{defi}[Projection map]
Let $n \in \mathbb{N}$.
We define the \emph{projection map}
$\beta_n\colon B[x] \to B[x]$ by 
$\beta_n\left(  
\sum_{i\in\mathbb{N}} b_i x^i\right)  \colonequals b_n x^n$,
for $\sum_{i\in\mathbb{N}} b_i x^i \in B[x]$.
\end{defi}

\begin{lem}\label{lem:horrible}
Suppose that $B[x;\sigma_B,\delta_B]$ is an
Ore group extension, $b \in B$ and 
$i,j,k \in \mathbb{N}$. 
Then the following assertions hold:
\begin{enumerate}[{\rm (i)}]
\item
$\beta_{i+j} ( A^k ( (Ax^i)(b x^j) ) ) 
= ( A^{k+1} \sigma_B^i(b) ) x^{i+j}$;

\item if 
$\sigma_B^i(b) \in [ \sigma_B^i(b) ]$,
then $\sigma_B^i(b)  x^{i+j} \in 
\beta_{i+j} \left( 
\langle (Ax^i)(b x^j) \rangle \right)$.  
\end{enumerate}
\end{lem}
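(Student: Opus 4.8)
The plan is to treat the two parts in turn, with part (i) a direct unwinding of the definitions and part (ii) a bookkeeping argument that feeds (i) into the characterization of $[\,\cdot\,]$ supplied by Proposition~\ref{prop:abeliangenerated}.

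For part (i), I would first expand the action on a single monomial: by Definition~\ref{def:oreextension}, $(ax^i)(bx^j) = \sum_{l \in \N} a\,\pi_l^i(b)\, x^{l+j}$, a sum supported on the degrees $l+j$ with $0 \le l \le i$. Next I would observe that a degree-zero operator $a' \in A$, viewed as $a'x^0 \in A[x]$, acts on $B[x]$ \emph{coefficient-wise}, since $\pi_m^0 = 0$ for $m>0$ while $\pi_0^0 = \id_B$; consequently applying a word $a_1 \cdots a_k$ from $A^k$ also acts coefficient-wise. Applying $\beta_{i+j}$ then isolates precisely the summand with $l+j = i+j$, i.e.\ $l=i$, and here $\pi_i^i = \sigma_B^i$ because it is the unique composition of $i$ copies of $\sigma_B$ and none of $\delta_B$. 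Reading off the surviving term, $\beta_{i+j}\bigl(a_1(\cdots(a_k(a\,\pi_l^i(b)))\cdots))\bigr) = a_1(\cdots(a_k(a\,\sigma_B^i(b)))\cdots)\,x^{i+j}$, and letting $a, a_1, \dots, a_k$ range over $A$ yields exactly the set $(A^{k+1}\sigma_B^i(b))x^{i+j}$, with $a$ absorbed as the innermost operator $a_{k+1}$. This establishes the set equality in both directions at once; the only care needed is the boundary case $k=0$, where $A^0$ acts as the identity on subsets.

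For part (ii), I would first rewrite the hypothesis. By Proposition~\ref{prop:abeliangenerated}, $[\sigma_B^i(b)] = \sum_{n \in \N_+} \Z(A^n \sigma_B^i(b))$, so $\sigma_B^i(b) \in [\sigma_B^i(b)]$ furnishes a finite expression $\sigma_B^i(b) = \sum_t z_t c_t$ with $z_t \in \Z$ and each $c_t \in A^{n_t}\sigma_B^i(b)$ for some $n_t \ge 1$. The crucial point is the degree shift: applying part (i) with $k = n_t - 1 \ge 0$ gives $c_t\,x^{i+j} \in (A^{n_t}\sigma_B^i(b))x^{i+j} = \beta_{i+j}\bigl(A^{n_t-1}((Ax^i)(bx^j))\bigr)$, so $c_t x^{i+j} = \beta_{i+j}(d_t)$ for some $d_t \in A^{n_t-1}((Ax^i)(bx^j))$. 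Since $\langle (Ax^i)(bx^j)\rangle$ is $A[x]$-stable and contains $(Ax^i)(bx^j)$, it is in particular stable under the degree-zero operators $A$, whence $d_t \in \langle (Ax^i)(bx^j)\rangle$. Finally, using that $\beta_{i+j}$ is additive (hence $\Z$-linear) and that $\langle (Ax^i)(bx^j)\rangle$ is a subgroup closed under integer combinations, I would conclude
\[
\sigma_B^i(b)\,x^{i+j} = \sum_t z_t\, c_t\, x^{i+j} = \beta_{i+j}\Bigl(\sum_t z_t\, d_t\Bigr) \in \beta_{i+j}\bigl(\langle (Ax^i)(bx^j)\rangle\bigr).
\]

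The routine part is part (i): careful expansion of the action together with the observation that projection kills all but the top-degree term. The main obstacle—more a matter of bookkeeping than of genuine difficulty—is aligning the index conventions in part (ii). The weakly $s$-unital representation of $\sigma_B^i(b)$ runs over words of length $n_t \ge 1$ in $A$, whereas the identity in part (i) carries a built-in shift $k \mapsto k+1$ because one application of $A$ is already absorbed into the factor $a$ of $(Ax^i)(bx^j)$. Getting $k = n_t - 1$ to line up, and in particular checking that it stays $\ge 0$, is the delicate step; it is exactly this nonnegativity that forces the use of $[\,\cdot\,]$ rather than $\langle\,\cdot\,\rangle$ in $B$, since an $n_t = 0$ term would reintroduce $\sigma_B^i(b)x^{i+j}$ itself and render the argument circular.
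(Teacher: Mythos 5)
Your proof is correct and follows essentially the same route as the paper: part (i) via expanding the action, noting that degree-zero operators act coefficient-wise and that $\pi_i^i=\sigma_B^i$, and part (ii) by combining the description $[\sigma_B^i(b)]=\sum_{n\in\N_+}\Z(A^n\sigma_B^i(b))$ with part (i) shifted by one (your $k=n_t-1$ matching the paper's $A^{k+1}$ versus $A^k$). The only difference is cosmetic — you argue element-wise where the paper argues with sets — and your closing remark about why $[\,\cdot\,]$ rather than $\langle\,\cdot\,\rangle$ is needed correctly identifies the role of weak $s$-unitality.
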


\begin{proof}
(i): It is clear that
\[
\beta_{i+j} ( A^k ( (Ax^i)(b x^j) ) )
\subseteq 
\beta_{i+j} ( A^k( (A 
\sigma_B^i(b))x^{i+j}))
\subseteq (A^{k+1} \sigma_B^i(b)) 
x^{i+j}.
\]
Now we show the reversed inclusion.
Take
$a_1,\ldots,a_{k+1} \in A$.
Then
\[
\begin{array}{rcl}
a_1 ( a_2 ( \cdots ( a_{k+1} \sigma_B^i(b)
) \cdots )) x^{i+j} &=& 
\beta_{i+j} \left(
a_1( a_2 ( \cdots ( a_k(
(a_{k+1} x^i) b x^j ) ) \cdots ) ) \right) 
\\[5pt]
&\in& \beta_{i+j} \left(
A^k( (Ax^i)(b x^j) ) \right). 
\end{array}
\]
(ii): Suppose  that
$\sigma_B^i(b) \in [ \sigma_B^i(b) ]
= \sum_{k \in \N}
\Z ( A^{k+1} \sigma_B^i(b) )$.
From (i) it follows that
\[\begin{array}{rcl}
\sigma_B^i(b) x^{i+j} &\in& 
\displaystyle
\sum_{k \in \N}
\Z \left( A^{k+1} \sigma_B^i(b) 
\right) x^{i+j} 
=
\displaystyle 
\beta_{i+j} \left(
\sum_{k \in \N} \Z \left( A^k \left(
(A x^i)(b x^j) \right) \right) \right) \\
 &=& \beta_{i+j} \left(
\langle (Ax^i)(b x^j) \rangle \right). 
\end{array}
\]
\end{proof}

\begin{thm}\label{thm:hilbertAB}
Suppose that $B[x;\sigma_B,\delta_B]$ is an
Ore group extension of a stably Noetherian 
group $B$ on which the action is weakly $s$-unital.
Let $\sigma_B$ be an $A$-stable 
surjection.
Then $B[x;\sigma_B,\delta_B]$ 
is stably Noetherian, seen as a group with 
operators in $A[x]$.
\end{thm}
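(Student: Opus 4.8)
The plan is to adapt the classical leading-coefficient proof of Hilbert's basis theorem to stable subgroups, using the hypotheses to overcome the two features that distinguish this setting from the associative unital one: the absence of a unit in $A$ (so one cannot simply ``multiply by $x$''), and the fact that $\sigma_B$ is only assumed surjective. By Proposition~\ref{thm:threeequivalent} it suffices to show that every $A[x]$-stable subgroup $N$ of $B[x;\sigma_B,\delta_B]$ is finitely generated. First I would record a preliminary observation: since $B$ is stably Noetherian and $\sigma_B$ is a surjective $A$-stable endomorphism, Proposition~\ref{prop:surinj} shows $\sigma_B$ is bijective; moreover $\sigma_B^{-1}$ preserves $A$-stability, because if $L$ is an $A$-stable subgroup and $\sigma_B(b)\in L$, then $\sigma_B(Ab)\subseteq A\sigma_B(b)\subseteq L$, whence $Ab\subseteq \sigma_B^{-1}(L)$.

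For each $n\in\N$ I would introduce the \emph{leading-coefficient subgroup} $L_n\colonequals\{\,b\in B : bx^n+(\text{lower-degree terms})\in N\,\}$, i.e. the set of coefficients of $x^n$ occurring among the elements of $N$ of degree at most $n$. Since the constant polynomials $a\in A\subseteq A[x]$ act coefficientwise, each $L_n$ is an $A$-stable subgroup of $B$, and acting by the operator $ax\in A[x]$ on an element with leading term $bx^n$ produces one with leading term $a\sigma_B(b)x^{n+1}$; thus $A\,\sigma_B(L_n)\subseteq L_{n+1}$. This is where weak $s$-unitality enters (cf. Lemma~\ref{lem:horrible}): given $b\in L_n$, the relation $\sigma_B(b)\in[\sigma_B(b)]=\sum_{k\in\N_+}\Z\,(A^k\sigma_B(b))$ from Proposition~\ref{prop:abeliangenerated} expresses $\sigma_B(b)$ as a $\Z$-combination of iterated operator images of $\sigma_B(b)$; since $A\sigma_B(L_n)\subseteq L_{n+1}$ and $L_{n+1}$ is an $A$-stable subgroup, each such term lies in $L_{n+1}$, giving the sharper inclusion $\sigma_B(L_n)\subseteq L_{n+1}$.

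Now comes the key device for the ascending chain. Because $\sigma_B$ is bijective, I would set $M_n\colonequals\sigma_B^{-n}(L_n)$; these are $A$-stable subgroups by the preliminary observation, and $\sigma_B(L_n)\subseteq L_{n+1}$ translates into $M_n\subseteq M_{n+1}$. Since $B$ is stably Noetherian, this chain stabilizes at some $d$, which (applying $\sigma_B^{n+1}$) means $L_{n+1}=\sigma_B(L_n)$, and hence $L_n=\sigma_B^{\,n-d}(L_d)$, for all $n\ge d$. Each $L_0,\dots,L_d$ is finitely generated (Proposition~\ref{thm:threeequivalent}); choosing generators $b_{n,1},\dots,b_{n,s_n}$ of $L_n$ and lifting them to elements $p_{n,t}\in N$ with leading term $b_{n,t}x^n$, I let $N'$ be the $A[x]$-stable subgroup generated by the finite set $\{p_{n,t}\}_{0\le n\le d,\ 1\le t\le s_n}$. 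Finally I would show $N=N'$. The inclusion $N'\subseteq N$ is clear. For the reverse, writing $L_m(N')$ for the analogue of $L_m$ attached to $N'$, I would prove $L_m(N)\subseteq L_m(N')$ for every $m$: for $m\le d$ this holds because $L_m(N')$ is an $A$-stable subgroup containing the generators $b_{m,t}$ of $L_m(N)$, while for $m>d$ the inclusions $\sigma_B(L_k(N'))\subseteq L_{k+1}(N')$ (valid for $N'$ exactly as for $N$) iterate to $L_m(N')\supseteq\sigma_B^{\,m-d}(L_d(N'))\supseteq\sigma_B^{\,m-d}(L_d(N))=L_m(N)$. With $L_m(N)=L_m(N')$ in hand, a downward induction on degree finishes the argument: given $p\in N$ of degree $m$ with leading coefficient $b\in L_m(N)=L_m(N')$, subtract an element $q\in N'$ of degree $\le m$ with the same coefficient at $x^m$, so that $p-q\in N$ has strictly smaller degree and lies in $N'$ by induction, whence $p\in N'$.

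The main obstacle, and the place where the hypotheses do real work, is the passage from $A\,\sigma_B(L_n)\subseteq L_{n+1}$ to $\sigma_B(L_n)\subseteq L_{n+1}$ together with the construction of an honestly ascending chain. Without a unit in $A$ one only controls $A$-multiples of leading coefficients, so weak $s$-unitality is needed to recover the leading coefficients themselves; and without invertibility of $\sigma_B$ the subgroups $L_n$ need not be nested, so the twist $M_n=\sigma_B^{-n}(L_n)$, legitimized by Proposition~\ref{prop:surinj}, is essential. Everything else mirrors the classical degree-reduction argument.
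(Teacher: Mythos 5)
Your proof is correct, and it organizes the argument differently from the paper's. The paper works with a single $A$-stable subgroup $Q$, consisting of all $b$ such that $\sigma_B^d(b)x^d + (\text{lower terms})$ lies in the given stable subgroup $P$ for \emph{some} $d$ --- in your notation $Q=\bigcup_n \sigma_B^{-n}(L_n)=\bigcup_n M_n$ --- and consequently has to do extra work in two places: Lemma~\ref{lem:horrible} is invoked to align the degrees of two witnesses when verifying $Q+Q\subseteq Q$ and again to raise all chosen generators to a common top degree $n$, and a separate finite generating set for $C\cap P$ with $C=\sum_{i=0}^{n-1}Bx^i$ is needed to handle polynomials of degree below $n$. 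You instead keep the data graded: your computation $A\,\sigma_B(L_n)\subseteq L_{n+1}$ followed by the weak $s$-unitality step is essentially Lemma~\ref{lem:horrible}(ii) with $i=1$, and you then convert the $L_n$ into an honestly ascending chain via $M_n=\sigma_B^{-n}(L_n)$ --- which is where you, unlike the paper's proof (which uses only surjectivity of $\sigma_B$, in writing an arbitrary leading coefficient as $\sigma_B^D(b)$), genuinely invoke the bijectivity supplied by Proposition~\ref{prop:surinj}, together with the correctly verified fact that $\sigma_B^{-1}$ preserves $A$-stable subgroups. The payoff of your arrangement is that retaining generators in every degree $0,\dots,d$ lets the final degree-reduction treat low and high degrees uniformly, dispensing with the paper's separate $C\cap P$ step; the stabilization of the chain $(M_n)$ replaces the paper's appeal to finite generation of $Q$. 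Both proofs use the hypotheses in the same essential places, so this is a legitimate alternative rather than a shortcut that hides a gap.
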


\begin{proof}
Put $S \colonequals  A[x]$ and
$T \colonequals  B[x;\sigma_B,\delta_B]$.
Let $P$ be a nonzero $S$-stable subgroup 
of $T$. Set 
\[
Q \colonequals  \{ b \in B \mid \exists 
d \geq 0 \
\exists
b_{d-1},b_{d-2},\ldots,b_0 \in B \ \mbox{with} \
\sigma_B^d(b) x^d +
b_{d-1}x^{d-1} + \cdots + b_0
\in P \}.
\]
We now show that $Q$ is an $A$-stable
subgroup of $B$.
We first show that 
$AQ \subseteq Q$.
Take $a \in A$ and $b \in Q$.
There are $d \geq 0$ and
$b_{d-1},b_{d-2},\ldots,b_0 \in B$ such that
\[ p \colonequals  \sigma_B^d(b) x^d +
b_{d-1}x^{d-1} + \cdots + b_0
\in P.\]
Since $\sigma_B$ is $A$-stable,
there is $a' \in A$ with 
$\sigma_B^d(ab) = a' \sigma_B^d(b)$.
Thus
\[
\sigma_B^d(a b) x^d +
a' b_{d-1}x^{d-1} + 
\cdots + a' b_0 \\[5pt]
= 
a' \sigma_B^d(b) x^d +
a' b_{d-1}x^{d-1} + 
\cdots + a' b_0
\ = \ a' p \in P,
\]
so that $ab \in Q$. 
Now we show that $Q + Q \subseteq Q$.
To this end, take $b' \in Q$, $e \geq 0$
and
$b_{e-1}',b_{e-2}',\ldots,b_0' \in B$ with
$
p' \colonequals  \sigma_B^e(b') x^e +
b_{e-1}' x^{e-1} + \cdots + 
b_0' \in P. 
$
Suppose that $d \geq e$. 
Since the action on $B$ is weakly $s$-unital, it follows
from Lemma~\ref{lem:horrible} 
(with $i = d-e$ and $j = e$)
that there is $p'' \in 
\sum_{k \in \N} \Z (A^k((Ax^{d-e}) p')) 
\subseteq P$ with leading term
$\sigma_B^d(b') x^d$.
Since $p + p'' \in P$ and the leading term
of $p + p''$ is $\sigma_B^d(b) +
\sigma_B^d(b') = \sigma_B^d(b+b')$
it follows that $b + b' \in Q$.
The case $d < e$ is treated in a 
similar manner. %\newpage

Since $B$ is stably Noetherian, it follows 
from Proposition~\ref{prop:subquotient}
that $Q$ is stably Noetherian.
Thus, 
by Proposition~\ref{thm:threeequivalent}, there are $k \in \N_+$ and
$b_1,\ldots,b_k \in Q$ with
$Q = \langle b_1,\ldots,b_k \rangle$.
For each $i \in \{1,\ldots,k \}$ 
there is $p_i \in P$ with
leading term $\sigma^{n_i}(b_i) x^{n_i}$.
Let $n \colonequals {\rm max} \{ n_i \}_{i=1}^k$.
Fix $i \in \{ 1,\ldots,k \}$.
Since the action on $B$ is weakly $s$-unital, it follows
from Lemma~\ref{lem:horrible} 
(with $i = n-n_i$ and $j = n_i$)
that there is 
$p_i' \in \sum_{k \in \N} 
\Z (A^k((Ax^{n-n_i})p_i)) 
\subseteq P$ with leading term
$\sigma_B^n(b_i) x^n$.
Put 
$C \colonequals  \sum_{i=0}^{n-1} B x^i$.
By Proposition~\ref{prop:directproduct},
$C$ is stably Noetherian.
Therefore,
by Proposition~\ref{prop:subquotient},
$C \cap P$ is stably Noetherian, which in turn,
by Proposition~\ref{thm:threeequivalent},
implies that
$C \cap P$ is finitely generated by
some $a_1,\ldots,a_t \in C \cap P$.

Put $P_0 \colonequals
\langle p_1', \ldots , p_k' , 
a_1, \ldots , a_t \rangle$.
We claim that $P_0 = P$. 
Assuming that the claim
holds, $P$ is finitely generated, 
and hence, by Proposition
\ref{thm:threeequivalent},
$P$ is stably Noetherian.
Now we show the claim.
Trivially, $P_0 \subseteq P$.
Now we show the reversed inclusion.
To this end, take $p \in P$ and
let $D$ denote the degree of $p$.

Case 1: $D < n$. Then $p \in C \cap P
= \langle a_1,\ldots,a_t \rangle 
\subseteq P_0$.
%so that $p \in \sum_{i=1}^t 
%\sum_{k \in \N} A^k a_i \in P_0$.

Case 2: $D \geq n$. Suppose that all
elements of $P$ of degree less than $D$
belong to $P_0$. 
Since $\sigma_B$ is surjective,
there is $b \in B$ such that
the leading
term of $p$ is $\sigma_B^D(b) x^D$.
From $p \in P$
it follows that $b \in Q$. 
Since $Q = \langle  
b_1,\ldots,b_k \rangle$ and the action on
$B$ is weakly $s$-unital, it follows
from Proposition~\ref{prop:abeliangenerated} that
$b \in \sum_{i=1}^k \sum_{j \in \N} 
\Z (A^{j+1} b_i)$. Thus,
since $\sigma_B$ is $A$-stable, we get
%it follows that
\[
\sigma_B^D(b) \in 
\sigma_B^D \left(  
\sum_{i=1}^k \sum_{j \in \N} 
\Z (A^{j+1} b_i) \right) \subseteq 
\sum_{i=1}^k \sum_{j \in \N} 
\Z (A^{j+1} \sigma_B^D(b_i)).
\]
By $s$-unitality of the action on $B$, for each $i \in \{1,\ldots,k\}$ we have
$\sigma_B^D(b_i) \in 
[ \sigma_B^D( b_i ) ]$.
Hence,
by Lemma~\ref{lem:horrible}(ii), 
it follows that
%we get that
\[
\begin{array}{rcl}
\sigma_B^D(b) x^D 
&\in& \sum_{i=1}^k \sum_{j \in \N} 
\Z (A^{j+1} \sigma_B^D(b_i)) x^D
\subseteq 
\beta_D \left(
\sum_{i=1}^k \Z (A^j( (A x^{D-n}) 
( \sigma_B^n(b_i) x^n )))
\right)
\\[5pt]
&=& 
\beta_D \left(
\sum_{i=1}^k \Z (A^j( (A x^{D-n}) p_i' ))
\right) 
\ \subseteq \ 
\beta_D( P_0 ).
\end{array}
\]
Thus, there is $\overline{p} \in P_0$
with leading term
$\sigma_B^D(b) x^D$.
Hence, $p - \overline{p} \in P$
and the leading term of 
$p - \overline{p}$ has degree less than $D$.
By the induction hypothesis it follows that
$p - \overline{p} \in P_0$. Thus, 
$p = \overline{p} + 
p - \overline{p} \in P_0 + P_0 
\subseteq P_0$.
\end{proof}

Note that under the assumptions of the 
previous theorem, the map 
$\sigma_B$ is necessarily 
\emph{bijective}, as 
established by Proposition~\ref{prop:surinj}.

\begin{thm}\label{thm:taulinear}
Suppose that $B[x;\sigma_B,\delta_B]$ is an
Ore group extension of a stably Noetherian 
group $B$ on which the action is $s$-unital.
Let $\tau$ be a map $A \to A$ and suppose that 
$\sigma_B$ is a $\tau$-twisted surjection.
Then $B[x;\sigma_B,\delta_B]$ 
is stably Noetherian, seen as a group 
with operators in $A[x]$.
\end{thm}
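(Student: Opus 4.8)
The plan is to reduce Theorem~\ref{thm:taulinear} to the already-established Theorem~\ref{thm:hilbertAB}. The key observation is that Theorem~\ref{thm:hilbertAB} requires three hypotheses on the Ore group extension: that $B$ be stably Noetherian, that the action of $A$ on $B$ be weakly $s$-unital, and that $\sigma_B$ be an $A$-stable surjection. In the present statement we are given that $B$ is stably Noetherian, that the action is $s$-unital, and that $\sigma_B$ is a $\tau$-twisted surjection. So the strategy is simply to verify that these stronger or differently-phrased hypotheses imply the three hypotheses of Theorem~\ref{thm:hilbertAB}, after which the conclusion---that $B[x;\sigma_B,\delta_B]$ is stably Noetherian as a group with operators in $A[x]$---follows immediately.

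First I would note that $B$ is stably Noetherian by assumption, so that hypothesis carries over verbatim. Second, I would observe that an $s$-unital action is in particular weakly $s$-unital: by Definition~\ref{def:sunital}, if $b \in Ab$ for every $b \in B$, then since $Ab = A^1 b \subseteq \widetilde{b} \subseteq [b]$, we immediately get $b \in [b]$ for every $b \in B$, which is precisely the weakly $s$-unital condition. Third, I would invoke Proposition~\ref{prop:tautwisted}: since $\sigma_B$ is $\tau$-twisted for the map $\tau\colon A \to A$, it is $A$-stable. Combined with the hypothesis that $\sigma_B$ is surjective, this shows $\sigma_B$ is an $A$-stable surjection.

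Having verified all three hypotheses, I would conclude by direct appeal to Theorem~\ref{thm:hilbertAB}, which yields that $B[x;\sigma_B,\delta_B]$ is stably Noetherian, seen as a group with operators in $A[x]$. This completes the proof.

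I do not anticipate any genuine obstacle here, since this theorem is essentially a corollary of Theorem~\ref{thm:hilbertAB} repackaged under the more concrete and often more readily checkable hypotheses of $s$-unitality and $\tau$-twistedness (as opposed to weak $s$-unitality and $A$-stability). The only point requiring a moment's care is the implication from $s$-unital to weakly $s$-unital, but this is transparent from the definitions, as $A^1 S$ is one of the sets whose union defines $\widetilde{S}$. The value of stating Theorem~\ref{thm:taulinear} separately lies in its hypotheses being the ones that arise naturally in the module- and ring-theoretic applications of Section~\ref{sec:oremodules}, where $\tau$ is typically a ring endomorphism $\sigma_A$ and $s$-unitality corresponds to the familiar module-theoretic property.
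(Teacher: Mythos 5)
Your proposal is correct and matches the paper's own proof, which likewise derives the result by combining Proposition~\ref{prop:tautwisted} (to get $A$-stability of $\sigma_B$ from $\tau$-twistedness) with Theorem~\ref{thm:hilbertAB}. Your explicit verification that $s$-unital implies weakly $s$-unital is a detail the paper leaves implicit, but it is exactly the right observation.
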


\begin{proof}
This follows from Proposition
\ref{prop:tautwisted} and
Theorem~\ref{thm:hilbertAB}. 
\end{proof}

The next result shows that the 
weakly $s$-unital condition in Theorem~\ref{thm:hilbertAB} cannot,
in general, be removed.

\begin{thm}\label{thm:polynomialnoether}
Consider $B[x]$ as an abelian group 
with operators in $A[x]$.
Then $B[x]$ is stably Noetherian if and 
only if $B$ is stably Noetherian and the 
action on $B$ is weakly $s$-unital.
\end{thm}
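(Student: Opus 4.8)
The plan is to treat the two implications separately, exploiting that for $\sigma_B = \id_B$ and $\delta_B = 0_B$ one has $\pi^i_k = 0$ for $k \neq i$ and $\pi^i_i = \id_B$, so the action of $A[x]$ on $B[x]$ collapses to the convolution $\bigl(\sum_i a_i x^i\bigr)\bigl(\sum_j b_j x^j\bigr) = \sum_{i,j}(a_i b_j)x^{i+j}$. For the ``if'' direction I would simply invoke Theorem~\ref{thm:hilbertAB}: if $B$ is stably Noetherian and the action is weakly $s$-unital, then, since $\id_B$ is a surjection that is trivially $A$-stable (indeed $\id_B(Ab) = Ab = A\,\id_B(b)$), that theorem gives at once that $B[x] = B[x;\id_B,0_B]$ is stably Noetherian.

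For the ``only if'' direction, assume $B[x]$ is stably Noetherian. To obtain that $B$ is stably Noetherian, I would consider the assignment $D \mapsto D[x] \colonequals \{\sum_i d_i x^i : d_i \in D\}$ on $A$-stable subgroups $D$ of $B$. Using the convolution formula one checks quickly that $D[x]$ is an $A[x]$-stable subgroup of $B[x]$ and that the assignment strictly preserves inclusions (if $d \in D' \setminus D$, then $dx^0 \in D'[x]\setminus D[x]$). An infinite strictly ascending chain of $A$-stable subgroups of $B$ would thus yield one in $B[x]$, contradicting the ascending chain condition there; hence $B$ is stably Noetherian.

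It remains to show that the action is weakly $s$-unital, which I would prove by contraposition. If it is not, then by Definition~\ref{def:sunital} and Proposition~\ref{prop:abeliangenerated} there is $b \in B$ with $b \notin [b] = \sum_{n \in \N_+} \Z(A^n b)$. I would then examine the ascending chain of $A[x]$-stable subgroups $P_N \colonequals \langle bx^0, bx^1, \ldots, bx^N \rangle$ and compute the projection $\beta_{N+1}(P_N)$. Writing $P_N = \sum_{n \in \N}\Z\bigl((A[x])^n\{bx^0,\ldots,bx^N\}\bigr)$ via Proposition~\ref{prop:abeliangenerated} and tracking degrees, I expect $\beta_{N+1}(P_N) = [b]x^{N+1}$: the generators all have degree at most $N$, so any iterated application landing in degree $N+1$ must use a positive total degree shift, hence at least one operator, leaving the $B$-component in $\sum_{n \geq 1}\Z(A^n b) = [b]$. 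Since $b \notin [b]$, this yields $bx^{N+1} \notin P_N$ while clearly $bx^{N+1} \in P_{N+1}$, so the chain does not stabilize and $B[x]$ is not stably Noetherian, a contradiction.

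The main obstacle is precisely this last computation $\beta_{N+1}(P_N) = [b]x^{N+1}$. The inclusion $\supseteq$ follows by applying monomial operators as in Lemma~\ref{lem:horrible}(i) (specialized to $\sigma_B = \id_B$) to realize each element of $(A^n b)x^{N+1}$ as a single monomial in $P_N$; the inclusion $\subseteq$ requires verifying that every degree-$(N+1)$ coefficient arising from an iterated operator-application to a generator $bx^\ell$ with $0 \leq \ell \leq N$ is a $\Z$-combination of elements of $A^n b$ for some $n \geq 1$, which is where the degree bookkeeping must be carried out with care.
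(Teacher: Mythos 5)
Your proposal is correct. The ``if'' direction and the reduction of ``$B[x]$ stably Noetherian $\Rightarrow$ $B$ stably Noetherian'' via $D \mapsto D[x]$ are exactly what the paper does. Where you genuinely diverge is in proving that the action must be weakly $s$-unital. The paper, following Varadarajan, takes $c \notin [c]$, passes to the quotient $D = B/[c]$, and observes that the stable subgroup $E = \langle c + [c]\rangle$ satisfies $AE = \{0\}$, so that $E[x]$ carries the trivial $A[x]$-action; since $E[x]$ is then an infinite direct sum of nonzero plain subgroups it is not stably Noetherian, and two applications of Proposition~\ref{prop:subquotient} transport the failure back to $B[x]$. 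You instead stay inside $B[x]$ and exhibit the explicit chain $P_N = \langle bx^0,\ldots,bx^N\rangle$, showing via $\beta_{N+1}$ that $bx^{N+1}\notin P_N$. Your key inclusion $\beta_{N+1}(P_N)\subseteq [b]x^{N+1}$ does hold, and in fact more easily than you suggest: writing $P_N = \sum_{n\in\N}\Z\bigl((A[x])^n S_N\bigr)$, the $n=0$ part consists of polynomials of degree at most $N$ and so dies under $\beta_{N+1}$, while for $n\geq 1$ \emph{every} coefficient of every element of $(A[x])^n S_N$ is a finite sum of elements of $A^n b$ (each of the $n$ applications of a polynomial operator applies exactly one element of $A$ per term, regardless of the degree shift), hence lies in $\Z(A^n b)\subseteq[b]$. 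So the ``positive degree shift forces an operator'' phrasing is a slight red herring --- the only thing the degree bookkeeping is needed for is killing the operator-free $n=0$ contributions --- and the reverse inclusion $\supseteq$ is not needed at all for your contradiction. The trade-off: the paper's quotient argument avoids all coefficient tracking and reuses Proposition~\ref{prop:subquotient}, while your argument is self-contained within $B[x]$, produces an explicit non-stabilizing chain, and needs no passage to quotient groups with operators.
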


\begin{proof}
The ``if'' statement follows
immediately from Theorem~\ref{thm:hilbertAB}.
Now we show the ``only if'' statement.
We use the argument from 
\cite[p. 2201]{Varadarajan1982}.
Suppose that $B[x]$ is stably Noetherian.
First we show that $B$ is stably Noetherian.
Seeking a contradiction, suppose that
$B_1 \subsetneq B_2 \subsetneq \cdots$
is an infinite ascending chain of stable
subgroups of $B$. Then, clearly,
$B_1[x] \subsetneq B_2[x] \subsetneq 
\cdots$ is an infinite ascending chain
of stable subgroups of $B[x]$,
which is a contradiction.
Now we show that the action of $A$
on $B$ is weakly $s$-unital.
Seeking a contradiction,
suppose that there is $c \in B$
such that $c \notin [c]$.
Since $[c]$ is a stable subgroup of $B$
we can consider the quotient
group $D \colonequals B/[c]$. 
Then $D$ is nonzero, because the class 
$c + [c]$ in $B/[c]$ is nonzero. 
Let $E \colonequals \langle c + [c] \rangle$ 
be the $A$-stable subgroup in
$D$ generated by $c + [c]$.
Consider $E[x]$ as a group with operators in 
$A[x]$. Since $Ac \in [c]$, it follows 
that $A E = \{0\}$. Hence, 
%and hence that
$A[x] E[x] = \{0\}$.
Thus, the $A[x]$-stable subgroups of $E[x]$
are the same as the additive subgroups
of $E[x]$. But $E[x]$ is an infinite
direct sum of the nonzero subgroups
$E x^n$, for $n \in \N$, so $E[x]$
is not stably Noetherian.
By Proposition~\ref{prop:subquotient}, 
$D[x]$ is not stably
Noetherian. Therefore,
by Proposition~\ref{prop:subquotient}
again, $B[x]$ is not stably Noetherian,
which is a contradiction.
\end{proof}

\begin{rem}
Theorem~\ref{thm:polynomialnoether} extends
Varadarajan's theorem, see \cite[Thm.~A]{Varadarajan1982}, 
generalizing the result 
from polynomial modules to the context of abelian 
groups with operators.
\end{rem}

\begin{ex}
Let $I \colonequals \{ 1,2,\ldots \}$ be a nonempty countable set.
For each $n \in I$, let $p_n$ denote the $n$th odd
prime number, so that $p_1 = 3$, 
$p_2 = 5$, $p_3 = 7$, and so on.
For each $n \in I$, let $C_n$ denote the 
cyclic group of order $p_n$.
Suppose that $B$ denotes the abelian group of all 
sequences $(c_n)_{n \in I} \in \prod_{n \in I} C_n$ 
of $c_n \in C_n$ with $c_n = e$ for all but 
finitely many $n \in I$.
Let $A = \{ a\}$ be a singleton set. Define the 
action of $A$ on $B$ by $a(c_n)_{n \in I} =
(c_n^{-1})_{n \in I}$. 
By Example~\ref{ex:inverseaction}
and Theorem~\ref{thm:polynomialnoether},
$B[x]$ is stably Noetherian as an abelian 
group with operators in $A[x]$ if and only 
if $I$ is finite.
\end{ex}

\section{Ore module extensions}\label{sec:oremodules}

Throughout this section, $R$ is a \emph{ring}.
By this we mean that $R$ is an additive group
equipped with a map $R \times R \ni (r,s) \mapsto
r s \in R$. We will refer to this map 
as the \emph{ring multiplication}.
From now on, let $M$ be a left 
\emph{$R$-module}. By this we mean that $M$
is an additive group equipped with a map
$R \times M \ni (r,m) \mapsto r m \in M$.
We will refer to this map as the 
\emph{module multiplication}.
If $N$ is an additive subgroup of $M$
and the image of the restriction of the module multiplication to $R \times N$
is contained in $N$, then $N$ is called a \emph{submodule} of $M$.
For $m \in M$ and $n \in \N_+$, we let $R^n m$
denote the set of elements 
$r_1(r_2 ( \cdots (r_{n-1} (r_n m)) \cdots ))$
for $n \in \N_+$ and $r_1,\ldots,r_n \in R$.

\begin{defi}
With the above notation, the left 
$R$-module $M$ is said to be: 
\begin{itemize}

\item \emph{associative} if
$(r s) m = r (s  m)$
for $r,s \in R$ and $m \in M$;

\item \emph{left distributive} if 
$r (m + n) = r m + r n$ for 
$r \in R$ and $m,n \in M$;

\item \emph{right distributive} if
$(r + s) m = r m + s m$
for $r,s \in R$ and $m \in M$;

\item \emph{weakly $s$-unital} if for each $m \in M$,
the relation $m \in \sum_{n \in \N_+} \Z (R^n m)$
holds;

\item \emph{$s$-unital} if for every $m \in M$
there is $r \in R$ such that $m = rm$;

\item \emph{Noetherian} if any chain $N_0 \subseteq N_1 \subseteq \cdots \subseteq N_i \subseteq \cdots $ of $R$-submodules of $M$ eventually stabilizes, i.e. if there is some $k\in \N$ such that 
$N_i=N_k$ for every $i\geq k$.

\end{itemize}
The ring $R$ is said to be \emph{associative} (resp. \emph{left 
distributive}, \emph{right distributive},
\emph{(weakly) left $s$-unital}) if $R$
is associative (resp. left distributive, right distributive, (weakly) $s$-unital)
as a left module over itself.
\end{defi}

\begin{rem}\label{rem:Moduleproperties}
Let $M$ be a left $R$-module. Consider $R$ and $M$ as
sets equipped with an action of $R$, induced by
the ring and module multiplication, respectively.

\begin{enumerate}[{\rm (i)}]

\item The $R$-module $M$ is left distributive
if and only if $M$ is a group with operators in $R$, 
in the sense defined in 
Section~\ref{sec:groupswithoperators}.

\item 
Suppose that $M$ is left distributive.
The $R$-module $M$ is associative if
and only if the triple $(R,R,M)$ is 
associative in the sense of Definition~\ref{def:assoc}.

\item
Suppose that $R$ is left distributive. The ring $R$ is associative if and only if the triple $(R,R,R)$ is associative in the sense of Definition~\ref{def:assoc}.
%The ring $R$ is left distributive and associative if and only if the triple $(R,R,R)$ is associative in the sense of Definition~\ref{def:assoc}.

\item Suppose that the $R$-module $M$ is 
left distributive. 
Then $M$ is weakly $s$-unital as an $R$-module 
if and only if the action of
$R$ on $M$ is weakly $s$-unital in the sense of 
Definition~\ref{def:sunital}.

\item Suppose that the $R$-module $M$ is associative
and left distributive. 
Then $M$ is $s$-unital as an $R$-module 
if and only if the action of
$R$ on $M$ is $s$-unital in the sense of 
Definition~\ref{def:sunital}.

\item Suppose that $R$ is associative as a ring. 

\begin{enumerate}[{\rm (a)}]

\item The ring $R$ is left (right) 
distributive as a left module over itself
if and only if $R$ is a 
left (right) \emph{near-ring} in the sense of 
\cite[Def.~1.1]{Pilz1983}.  

\item The $R$-module $M$ 
is right distributive and associative 
if and only if $M$ is a \emph{near module over $R$},
in the sense of \cite[Def.~1]{Rama2021}, or, equivalently, 
an \emph{$R$-group}, in the sense of 
\cite[Def.~1.17]{Pilz1983}.

\item  The $R$-module $M$ 
is left distributive and associative 
if and only if $M$ is a 
\emph{modified near module over $R$},
in the sense of \cite[Def.~2]{Rama2021}.

\end{enumerate}
\end{enumerate}
\end{rem}

From now on, $\sigma_R$ and $\delta_R$
are maps $R \to R$,
and
$\sigma_M$ and $\delta_M$
are additive maps $M \to M$.

\begin{defi}[Ore ring extension]
Suppose that the ring $R$ is left distributive.
By the \emph{Ore ring extension} 
$R[x ; \sigma_R , \delta_R]$ we mean the 
polynomial group $R[x]$ with a left action on itself,
the action being defined as in 
Definition~\ref{def:oreextension}, with $A = B = R$.
\end{defi}

\begin{defi}[Ore module extension]\label{def:oremodule}
Suppose that both the ring $R$ and the left $R$-module 
$M$ are left distributive.
By the \emph{Ore module extension} 
$M[x;\sigma_M,\delta_M]$ we mean the polynomial group
$M[x]$ equipped with the ring $R[x ; \sigma_R, \delta_R]$
as a set of operators, the action being defined as in 
Definition~\ref{def:oreextension} with $A = R$ and
$B = M$.
\end{defi}

\begin{prop}\label{prop:Mleftdistributive}
Let $R$ be a left distributive ring, and let $M$ be
a left distributive $R$-module. Suppose that 
$M$ is associative as a left $R$-module.
If $\delta_M$ is a $\sigma_R$-derivation on $M$
and $\sigma_M$ is $\sigma_R$-twisted,
then the Ore module extension
$M[x;\sigma_M,\delta_M]$ is left 
distributive and associative as a 
left $R[x;\sigma_R,\delta_R]$-module.
\end{prop}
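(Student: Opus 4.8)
The plan is to prove the two assertions — left distributivity and associativity — separately, translating each into the group-with-operators language of Section~\ref{sec:groupswithoperators} via Remark~\ref{rem:Moduleproperties}, and then invoking the machinery already developed for Ore group extensions. Throughout, I would identify $A = B = R$ and $C = M$, with $\sigma_B = \sigma_R$, $\delta_B = \delta_R$, $\sigma_C = \sigma_M$, $\delta_C = \delta_M$, so that $B[x;\sigma_B,\delta_B]$ is the underlying set of the Ore ring extension $R[x;\sigma_R,\delta_R]$ and $C[x;\sigma_C,\delta_C]$ is the Ore module extension $M[x;\sigma_M,\delta_M]$.

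For left distributivity, by Remark~\ref{rem:Moduleproperties}(i) (applied to the Ore extensions in place of $R$ and $M$) it suffices to show that $M[x;\sigma_M,\delta_M]$ is a group with operators in $R[x;\sigma_R,\delta_R]$, i.e.\ that each element of $R[x;\sigma_R,\delta_R]$ acts as a group endomorphism of $M[x]$. Since $\sigma_M$ and $\delta_M$ are additive, every composition appearing in $\pi^i_k$ is additive, and hence each $\pi^i_k$ is additive. Writing $\alpha = \sum_i a_i x^i$, $\beta = \sum_j b_j x^j$ and $\gamma = \sum_j c_j x^j$, additivity of the maps $\pi^i_k$ together with left distributivity of $M$ — which by Remark~\ref{rem:Moduleproperties}(i) makes $M$ a group with operators in $R$, so that each $a_i$ acts distributively — yields $\alpha(\beta + \gamma) = \alpha\beta + \alpha\gamma$ term by term in the defining formula of Definition~\ref{def:oreextension}. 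This establishes left distributivity.

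For associativity, by Remark~\ref{rem:Moduleproperties}(ii) (applied to the Ore extensions, using the left distributivity just established) it suffices to show that the triple $(R[x;\sigma_R,\delta_R], R[x;\sigma_R,\delta_R], M[x;\sigma_M,\delta_M])$ is associative in the sense of Definition~\ref{def:assoc}, and I would obtain this from Theorem~\ref{thm:MimpliesMx} with the identifications above. Its hypotheses hold: $(R,R,M)$ is associative by Remark~\ref{rem:Moduleproperties}(ii) (since $M$ is associative and left distributive as an $R$-module), the hypothesis that $\delta_M$ is a $\sigma_R$-derivation on $M$ supplies the required $\delta_R$-twisted $\sigma_R$-derivation condition, and $\sigma_M$ being $\sigma_R$-twisted is assumed directly. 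Theorem~\ref{thm:MimpliesMx} then gives associativity of $(R[x], R[x;\sigma_R,\delta_R], M[x;\sigma_M,\delta_M])$.

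The final — and only genuinely delicate — step is to recognize that this is exactly the triple we need. The subtlety is that the first slot in Theorem~\ref{thm:MimpliesMx} is the plain polynomial group $A[x] = R[x]$ acting through the Ore formula, whereas Remark~\ref{rem:Moduleproperties}(ii) asks for the ring $R[x;\sigma_R,\delta_R]$ in the first two slots. I would resolve this by observing that these coincide as sets-with-action: the underlying set of $R[x;\sigma_R,\delta_R]$ is precisely $R[x]$, and by the definitions of the Ore ring extension and the Ore module extension, the action of $R[x;\sigma_R,\delta_R]$ on itself (its ring multiplication) and on $M[x;\sigma_M,\delta_M]$ (the module multiplication) are literally the Ore actions of $A[x] = R[x]$ appearing in Theorem~\ref{thm:MimpliesMx}. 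Hence the associative triple produced by the theorem is exactly $(R[x;\sigma_R,\delta_R], R[x;\sigma_R,\delta_R], M[x;\sigma_M,\delta_M])$, and a final appeal to Remark~\ref{rem:Moduleproperties}(ii) completes the argument. I expect this bookkeeping identification, rather than any computation, to be the main point to get right, since all the analytic content is already packaged in Theorem~\ref{thm:MimpliesMx}.
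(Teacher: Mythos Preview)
Your proposal is correct and takes essentially the same approach as the paper, which simply cites Theorem~\ref{thm:MimpliesMx} and Remark~\ref{rem:Moduleproperties}(ii). Your explicit treatment of left distributivity is more careful than strictly necessary---it is already built into Definition~\ref{def:oremodule}, which declares $M[x;\sigma_M,\delta_M]$ to be a group with operators in $R[x;\sigma_R,\delta_R]$ and hence left distributive by Remark~\ref{rem:Moduleproperties}(i)---but your bookkeeping identification of $(A[x],B[x;\sigma_B,\delta_B],C[x;\sigma_C,\delta_C])$ with the required triple is exactly the point the paper's one-line proof leaves implicit.
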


\begin{proof}
This follows from Theorem~\ref{thm:MimpliesMx}
and Remark~\ref{rem:Moduleproperties}(ii).
\end{proof}

\begin{prop}
Let $R$ be a left distributive ring and $M$ be a modified near module over $R$.
If $\sigma_R$ is a ring endomorphism of $R$, 
$\delta_R$ is a $\sigma_R$-derivation on $R$,
$\delta_M$ is a $\sigma_R$-derivation on $M$
and $\sigma_M$ is $\sigma_R$-twisted,
then $M[x;\sigma_M,\delta_M]$ is a modified near 
left module over $R[x;\sigma_R,\delta_R]$.
\end{prop}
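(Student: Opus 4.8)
The plan is to verify the defining conditions of a modified near module by unwinding Remark~\ref{rem:Moduleproperties}(vi)(c). Since $M$ is a modified near module over $R$, that remark tells us $R$ is associative as a ring and that $M$ is left distributive and associative as a left $R$-module. To prove that $M[x;\sigma_M,\delta_M]$ is a modified near module over $R[x;\sigma_R,\delta_R]$ it therefore suffices to establish three things: (a) $R[x;\sigma_R,\delta_R]$ is an associative ring; (b) $M[x;\sigma_M,\delta_M]$ is left distributive as a left $R[x;\sigma_R,\delta_R]$-module; and (c) $M[x;\sigma_M,\delta_M]$ is associative as such a module.

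For (a), I would apply Proposition~\ref{prop:Mleftdistributive} in the special case $M = R$, $\sigma_M = \sigma_R$, $\delta_M = \delta_R$, viewing $R$ as a left module over itself. The hypotheses transfer verbatim: $R$ is a left distributive ring and hence a left distributive module over itself; it is associative as a module over itself precisely because it is associative as a ring; $\delta_R$ is a $\sigma_R$-derivation on $R$ by assumption; and $\sigma_M = \sigma_R$ being $\sigma_R$-twisted is exactly the statement that $\sigma_R$ is multiplicative, which holds since $\sigma_R$ is a ring endomorphism. The proposition then yields that $R[x;\sigma_R,\delta_R]$ is left distributive and associative as a module over itself, which is, by the very definition of the ring-theoretic terms in the preceding definition block, the assertion that $R[x;\sigma_R,\delta_R]$ is an associative (and left distributive) ring.

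For (b) and (c), I would apply Proposition~\ref{prop:Mleftdistributive} directly to the given module $M$. All of its hypotheses are in force: $R$ is left distributive; $M$ is left distributive and associative as a left $R$-module, being a modified near module; $\delta_M$ is a $\sigma_R$-derivation on $M$; and $\sigma_M$ is $\sigma_R$-twisted. The proposition gives at once that $M[x;\sigma_M,\delta_M]$ is left distributive and associative as a left $R[x;\sigma_R,\delta_R]$-module, which is exactly (b) and (c). Combining (a)--(c) with Remark~\ref{rem:Moduleproperties}(vi)(c) then shows that $M[x;\sigma_M,\delta_M]$ is a modified near module over $R[x;\sigma_R,\delta_R]$.

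The point requiring the most care is the self-referential application in step (a): one must recognize that running Proposition~\ref{prop:Mleftdistributive} with the module taken to be $R$ itself converts its module-associativity conclusion into ring-associativity of the Ore ring extension, and in particular that $R[x;\sigma_R,\delta_R]$ is genuinely associative, so that the phrase ``modified near module over $R[x;\sigma_R,\delta_R]$'' is even meaningful. No genuinely new computation is needed, since the underlying Vandermonde/Leibniz bookkeeping and the associativity argument have already been carried out in Theorem~\ref{thm:MimpliesMx}, on which Proposition~\ref{prop:Mleftdistributive} rests.
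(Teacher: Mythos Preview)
Your proof is correct and follows essentially the same approach as the paper's. The only cosmetic difference is in part (a): the paper obtains the associativity of $R[x;\sigma_R,\delta_R]$ by invoking Theorem~\ref{thm:MimpliesMx} together with Remark~\ref{rem:Moduleproperties}(iii) directly, whereas you route the same argument through Proposition~\ref{prop:Mleftdistributive} applied with $M=R$; since that proposition is itself an immediate consequence of Theorem~\ref{thm:MimpliesMx}, the two arguments coincide.
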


\begin{proof}
By Theorem~\ref{thm:MimpliesMx} and
Remark~\ref{rem:Moduleproperties}(iii), the ring
$R[x ; \sigma_R , \delta_R]$ is associative. 
The claim now follows from 
Remark~\ref{rem:Moduleproperties}(vi)(c) and
Proposition~\ref{prop:Mleftdistributive}.
\end{proof}

\begin{cor}
Let $R$ be a left distributive ring and $M$ be
a left distributive $R$-module. 
Suppose that $\Ann_M(R)=\{ 0 \}$.
Then the following assertions are equivalent:

\begin{enumerate}[{\rm (i)}]

\item the left $R[x;\sigma_R,\delta_R]$-module
$M[x;\sigma_M,\delta_M]$ is associative;

\item $M$ is associative, 
$\delta_M$ is a $\sigma_R$-derivation,
and $\sigma_M$ is $\sigma_R$-twisted.

\end{enumerate}
\end{cor}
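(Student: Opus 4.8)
The plan is to deduce this corollary directly from Corollary~\ref{cor:assocEquiv} by specializing the general group-with-operators framework to the module setting, using the dictionary recorded in Remark~\ref{rem:Moduleproperties}. Concretely, I would apply Corollary~\ref{cor:assocEquiv} with $A = R$ (the set of operators), $B = R$ (the ring viewed as an abelian group with operators in itself via the ring multiplication), and $C = M$ (the module viewed as an abelian group with operators in $R$ via the module multiplication), together with $\sigma_B = \sigma_R$, $\delta_B = \delta_R$, $\sigma_C = \sigma_M$, and $\delta_C = \delta_M$. Since $R$ and $M$ are assumed left distributive, Remark~\ref{rem:Moduleproperties}(i) guarantees that they are genuinely abelian groups with operators in $R$ in the sense of Section~\ref{sec:groupswithoperators}, so this specialization is legitimate. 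The maps $\sigma_M$ and $\delta_M$ are additive, hence group endomorphisms of $M$, and the hypothesis $\Ann_M(R) = \{0\}$ is exactly the condition $\Ann_C(A) = \{0\}$ required by Corollary~\ref{cor:assocEquiv}.

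First I would translate condition (i). By Remark~\ref{rem:Moduleproperties}(ii), associativity of the left $R[x;\sigma_R,\delta_R]$-module $M[x;\sigma_M,\delta_M]$ is equivalent to associativity of the triple $(R[x;\sigma_R,\delta_R], R[x;\sigma_R,\delta_R], M[x;\sigma_M,\delta_M])$. The key observation is that, under the present identifications, the action of $R[x] = A[x]$ on $R[x;\sigma_R,\delta_R] = B[x;\sigma_B,\delta_B]$ prescribed in Definition~\ref{def:oreextension} is precisely the ring multiplication of the Ore ring extension, and likewise the action of $R[x]$ on $M[x;\sigma_M,\delta_M]$ coincides with the module multiplication of the Ore module extension (Definition~\ref{def:oremodule}). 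Consequently the associativity equation $(\alpha\beta)\gamma = \alpha(\beta\gamma)$ is literally the same whether the leftmost factor is named $R[x]$ or $R[x;\sigma_R,\delta_R]$, so this triple being associative is exactly the associativity of the triple $(R[x], R[x;\sigma_R,\delta_R], M[x;\sigma_M,\delta_M]) = (A[x], B[x;\sigma_B,\delta_B], C[x;\sigma_C,\delta_C])$ appearing in Corollary~\ref{cor:assocEquiv}(i). Thus (i) here is equivalent to Corollary~\ref{cor:assocEquiv}(i).

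Next I would translate condition (ii). Again by Remark~\ref{rem:Moduleproperties}(ii), $M$ is associative as a left $R$-module if and only if the triple $(R,R,M) = (A,B,C)$ is associative. Unwinding the definitions with $B = R$ acting on $C = M$ by module multiplication, the statement that $\sigma_M$ is $\sigma_R$-twisted is exactly the statement that $\sigma_C$ is $\sigma_B$-twisted, and the statement that $\delta_M$ is a $\sigma_R$-derivation on $M$ --- meaning $\delta_M(rm) = \sigma_R(r)\delta_M(m) + \delta_R(r)m$, in keeping with the usage in Proposition~\ref{prop:Mleftdistributive} --- is exactly the statement that $\delta_C$ is a $\delta_B$-twisted $\sigma_B$-derivation. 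Hence (ii) here is equivalent to Corollary~\ref{cor:assocEquiv}(ii), and combining the two translations the desired equivalence follows at once from the equivalence (i)$\Leftrightarrow$(ii) of Corollary~\ref{cor:assocEquiv}.

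I do not expect a serious obstacle, since the entire mathematical content is already carried by Corollary~\ref{cor:assocEquiv} and its underlying Theorem~\ref{thm:MimpliesMx}; the work is purely one of correctly matching the module-theoretic vocabulary to the group-with-operators vocabulary. The one point demanding care is the identification of the leftmost factor $R[x]$ with the Ore ring extension $R[x;\sigma_R,\delta_R]$ in the associativity triple: I would make sure to note explicitly that the two prescribe the very same operator action, so that no discrepancy arises between module-level associativity and the triple-level associativity used in Corollary~\ref{cor:assocEquiv}.
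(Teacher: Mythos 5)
Your proposal is correct and follows essentially the same route as the paper, whose entire proof is ``This follows from Corollary~\ref{cor:assocEquiv} and Remark~\ref{rem:Moduleproperties}(ii)''; you simply spell out the specialization $A=B=R$, $C=M$ and the vocabulary translation in more detail. The extra care you take in identifying the leftmost factor $R[x]$ with $R[x;\sigma_R,\delta_R]$ and in reading ``$\sigma_R$-derivation on $M$'' as ``$\delta_R$-twisted $\sigma_R$-derivation'' is exactly the implicit content of the paper's one-line proof.
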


\begin{proof}
This follows from Corollary~\ref{cor:assocEquiv}
and Remark~\ref{rem:Moduleproperties}(ii).
\end{proof} 

\begin{thm}\label{thm:HilbertM}
Suppose that $R$ is a left distributive ring,
and that
$M$ is a left distributive, $s$-unital, and 
Noetherian $R$-module.
Let $\sigma_M$ be a $\sigma_R$-twisted 
surjection. 
Then $M[x;\sigma_M,\delta_M]$ is
Noetherian as a left 
$R[x;\sigma_R,\delta_R]$-module. 
\end{thm}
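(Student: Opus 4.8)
The plan is to reduce Theorem~\ref{thm:HilbertM} to the group-theoretic Hilbert's basis theorem already established in Theorem~\ref{thm:hilbertAB}, by translating each module hypothesis into the corresponding statement about the action of the operator set. First I would set $A = R$ and $B = M$, and check that the standing hypotheses of Theorem~\ref{thm:hilbertAB} are met. Since $M$ is left distributive, Remark~\ref{rem:Moduleproperties}(i) tells us that $M$ is an abelian group with operators in $R$, so the Ore group extension $M[x;\sigma_M,\delta_M]$ is defined and is a group with operators in $R[x]$. The zero element of $R$ required by Definition~\ref{defi:zeroelement} is simply $0 \in R$, for which $0 \cdot m = 0$ holds for all $m \in M$.

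Next I would verify the three remaining hypotheses of Theorem~\ref{thm:hilbertAB}. The module $M$ is assumed Noetherian, which for a left distributive module means the ascending chain condition on submodules; since the submodules of $M$ are exactly the $R$-stable subgroups (an additive subgroup $N$ with $RN \subseteq N$ is precisely a stable subgroup in the sense of Section~\ref{sec:groupswithoperators}), this is the same as $M$ being \emph{stably Noetherian}. The module $M$ is $s$-unital, hence in particular weakly $s$-unital, so by Remark~\ref{rem:Moduleproperties}(iv) the action of $R$ on $M$ is weakly $s$-unital in the sense of Definition~\ref{def:sunital}. Finally, $\sigma_M$ is given to be a $\sigma_R$-twisted surjection; by Proposition~\ref{prop:tautwisted} (with $\tau = \sigma_R$) any $\sigma_R$-twisted homomorphism is $R$-stable, so $\sigma_M$ is an $R$-stable surjection.

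With all hypotheses of Theorem~\ref{thm:hilbertAB} now in place, that theorem yields that $M[x;\sigma_M,\delta_M]$ is stably Noetherian as a group with operators in $R[x]$. The last step is to translate this conclusion back into module language: the operators on $M[x;\sigma_M,\delta_M]$ come from $R[x;\sigma_R,\delta_R]$, and the $R[x]$-stable subgroups are exactly the $R[x;\sigma_R,\delta_R]$-submodules, so stable Noetherianity of the group is the same as Noetherianity of the module. This gives the desired conclusion.

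I do not expect any genuine obstacle here, as the theorem is essentially a dictionary translation of Theorem~\ref{thm:hilbertAB} through the correspondences catalogued in Remark~\ref{rem:Moduleproperties}. The only point demanding mild care is matching the exact definitions: confirming that the module-theoretic ``Noetherian'' and ``$s$-unital'' conditions correspond precisely to ``stably Noetherian'' and ``(weakly) $s$-unital action'' under the left-distributivity assumption, and that $\sigma_R$-twistedness is the right hypothesis to invoke Proposition~\ref{prop:tautwisted}. These are all covered by the preceding remarks and propositions, so the proof should be short.
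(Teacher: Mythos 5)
Your proposal is correct and follows exactly the paper's route: the paper proves Theorem~\ref{thm:HilbertM} by citing Proposition~\ref{prop:tautwisted} (to get that the $\sigma_R$-twisted surjection $\sigma_M$ is $R$-stable) and then applying Theorem~\ref{thm:hilbertAB}, which is precisely your argument. You have merely spelled out the dictionary between module notions and group-with-operators notions (Noetherian vs.\ stably Noetherian, $s$-unital vs.\ weakly $s$-unital action) that the paper leaves implicit via Remark~\ref{rem:Moduleproperties}.
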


\begin{proof}
This follows from Proposition~\ref{prop:tautwisted}
and Theorem~\ref{thm:hilbertAB}. 
\end{proof}

\begin{cor}\label{cor:HilbertM}
Let $R$ be a left distributive ring and $M$ be an $s$-unital, and Noetherian $R$-module.
Suppose that $\sigma_R$ is a ring 
automorphism of $R$
and $\alpha$ is an $R$-module automorphism 
of $M$
with $\sigma_M = \sigma_R \circ \alpha$.
Then $M[x;\sigma_M,\delta_M]$ is
Noetherian as a left 
$R[x;\sigma_R,\delta_R]$-module.
\end{cor}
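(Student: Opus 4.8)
The plan is to deduce Corollary~\ref{cor:HilbertM} from Theorem~\ref{thm:HilbertM} by checking that each hypothesis of the latter is satisfied. The map $\sigma_M = \sigma_R \circ \alpha$ is a composition of the ring automorphism $\sigma_R$ and the module automorphism $\alpha$, so in particular it is a surjection. To invoke Theorem~\ref{thm:HilbertM} I must verify that $\sigma_M$ is $\sigma_R$-twisted, i.e. that $\sigma_M(rm) = \sigma_R(r)\sigma_M(m)$ for all $r \in R$ and $m \in M$. This is where the hypotheses on $\alpha$ and $\sigma_R$ are used: since $\alpha$ is an $R$-module automorphism it is $R$-linear, so $\alpha(rm) = r\alpha(m)$, and since $\sigma_R$ is a ring endomorphism compatible with the module action one computes
\[
\sigma_M(rm) = \sigma_R(\alpha(rm)) = \sigma_R(r\alpha(m)) = \sigma_R(r)\sigma_R(\alpha(m)) = \sigma_R(r)\sigma_M(m),
\]
so $\sigma_M$ is indeed $\sigma_R$-twisted.

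Next I would note that the remaining hypotheses of Theorem~\ref{thm:HilbertM} are already assumed in the corollary: $R$ is left distributive, and $M$ is $s$-unital and Noetherian as a left $R$-module. The only subtlety is that Theorem~\ref{thm:HilbertM} also requires $M$ to be \emph{left distributive}, which is not explicitly listed among the hypotheses of the corollary. I expect this to follow automatically from the module structure assumed here — in particular, the existence of an $R$-module automorphism $\alpha$ presupposes that $M$ carries enough structure (biadditivity of the module multiplication) to make left distributivity hold; if not, left distributivity should be folded in as a standing assumption. This is the one point worth checking carefully, and it is the main (admittedly minor) obstacle: confirming that ``$R$-module'' as used in the corollary already entails left distributivity in the sense of the paper's definition, via Remark~\ref{rem:Moduleproperties}(i).

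With these verifications in place, the conclusion is immediate: Theorem~\ref{thm:HilbertM} applied with the $\sigma_R$-twisted surjection $\sigma_M$ yields that $M[x;\sigma_M,\delta_M]$ is Noetherian as a left $R[x;\sigma_R,\delta_R]$-module, which is exactly the assertion of the corollary. The role of the corollary is essentially to repackage Theorem~\ref{thm:HilbertM} into the more familiar and frequently occurring situation where the twisting map $\sigma_M$ arises concretely as $\sigma_R \circ \alpha$ for an automorphism $\alpha$, so that the abstract $\sigma_R$-twisted surjection hypothesis need not be verified by hand in applications.
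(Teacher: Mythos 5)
Your proposal is correct and follows exactly the paper's route: the paper's entire proof is the observation that $\sigma_M = \sigma_R \circ \alpha$ is a $\sigma_R$-twisted surjection, after which Theorem~\ref{thm:HilbertM} applies directly. Your extra care in spelling out the twisting computation (which tacitly uses that the extension of $\sigma_R$ to $M$ is multiplicative over the module action, as in Example~\ref{ex:associative}) and in flagging that left distributivity of $M$ is an implicit standing hypothesis only makes explicit what the paper leaves to the reader.
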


\begin{proof}
Since, clearly, $\sigma_M$ is 
$\sigma_R$-twisted,
the claim follows from Theorem~\ref{thm:HilbertM}.
\end{proof}

\begin{ex}
Suppose that $D$ is a Dedekind domain. 
Let $I$ be an
ideal of $D$ and let $\sigma_D$ be a ring 
automorphism of $D$. 
If we put $\sigma_I \colonequals \sigma_D|_I$, then
$\sigma_I$ is $\sigma_D$-twisted.
Let $\delta_I\colon I \to I$ be any 
additive map. Since $D$ is, in particular,
Noetherian, it follows that $I$ is Noetherian
as a left $D$-module. By Corollary~\ref{cor:HilbertM}
it follows that
$I[x;\sigma_I,\delta_I]$ is Noetherian
as a left $D[x ; \sigma_D,\delta_D]$-module.
\end{ex}

\begin{ex}
Let $F$, $V$, $\sigma_F$, $\alpha$, 
$\sigma_V$, $\delta_F$, and $\delta_V$ be defined 
as in Example \ref{ex:associative}.
Suppose that $V$ is finite-dimensional and that $\sigma_F$ and $\alpha$ are surjective (and hence also injective).
By Corollary~\ref{cor:HilbertM},
$V[x;\sigma_V,\delta_V]$ is Noetherian as a left
$F[x ; \sigma_F , \delta_F]$-module.
\end{ex}

\begin{thm}\label{thm:leftdistributive}
Suppose that $R$ is a left distributive ring and that the left $R$-module $M$ is left distributive.
Then the left $R[x]$-module $M[x]$ is Noetherian 
if and only if $M$ is Noetherian and weakly $s$-unital.
\end{thm}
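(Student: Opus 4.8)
The plan is to reduce Theorem~\ref{thm:leftdistributive} to the already-established group-theoretic results, namely Theorem~\ref{thm:polynomialnoether} together with the dictionary in Remark~\ref{rem:Moduleproperties}. The crucial observation is that when $\sigma_R = \id_R$ and $\delta_R = 0$, the Ore ring extension $R[x]$ is simply the polynomial group $R[x]$ acting on $M[x]$ exactly as $A[x]$ acts on $B[x]$ in Section~\ref{sec:hilbertsbasistheorem}, with $A = R$ and $B = M$. So the entire statement should follow by translating each module-theoretic hypothesis and conclusion into its ``group with operators'' counterpart.

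First I would record the standing setup: since $R$ is left distributive and $M$ is a left distributive $R$-module, Remark~\ref{rem:Moduleproperties}(i) tells us that $M$ is a group with operators in $R$, so the notion of $R$-stable subgroup of $M$ coincides with the notion of submodule, and likewise $R[x]$-stable subgroups of $M[x]$ coincide with $R[x]$-submodules of $M[x]$. Consequently, ``$M$ is Noetherian as an $R$-module'' is the same as ``$M$ is stably Noetherian as a group with operators in $R$,'' and ``$M[x]$ is Noetherian as an $R[x]$-module'' is the same as ``$M[x]$ is stably Noetherian as a group with operators in $R[x]$.'' The matching of the weakly $s$-unital conditions is exactly Remark~\ref{rem:Moduleproperties}(iv). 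One point that needs a word of care is the presence of a zero element in the operator set, which Section~\ref{sec:hilbertsbasistheorem} assumes: here $A = R$ is a ring, and the additive identity $0 \in R$ satisfies $0 \cdot m = 0$ for all $m \in M$ (this is the zero of the $R$-module $M$), so $R$ indeed has a zero element in the sense of Definition~\ref{defi:zeroelement}.

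With these identifications in place, the proof is essentially a citation. Applying Theorem~\ref{thm:polynomialnoether} with $A = R$ and $B = M$ yields that $M[x]$ is stably Noetherian (as a group with operators in $R[x]$) if and only if $M$ is stably Noetherian and the action of $R$ on $M$ is weakly $s$-unital. Re-reading both sides through Remark~\ref{rem:Moduleproperties}(i) and (iv), the left-hand side becomes ``$M[x]$ is Noetherian as a left $R[x]$-module'' and the right-hand side becomes ``$M$ is Noetherian and weakly $s$-unital,'' which is precisely the desired equivalence.

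I do not expect a genuine obstacle here, since all the hard work—both directions of Theorem~\ref{thm:polynomialnoether}, including the delicate counterexample construction showing the weakly $s$-unital hypothesis cannot be dropped—has already been carried out at the group level. The only thing requiring attention is verifying that the module-theoretic vocabulary lines up cleanly with the group-with-operators vocabulary, and this is exactly what Remark~\ref{rem:Moduleproperties} was set up to provide. The resulting proof should therefore read:
\begin{proof}
By Remark~\ref{rem:Moduleproperties}(i), both $M$ and $M[x]$ are groups with operators in $R$ and $R[x]$, respectively, and the $R[x]$-submodules of $M[x]$ are exactly its $R[x]$-stable subgroups; in particular $M[x]$ is Noetherian as a left $R[x]$-module if and only if it is stably Noetherian, and similarly for $M$. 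The additive identity of $R$ serves as a zero element for the action of $R$ on $M$. By Remark~\ref{rem:Moduleproperties}(iv), $M$ is weakly $s$-unital as an $R$-module if and only if the action of $R$ on $M$ is weakly $s$-unital. The claim now follows by applying Theorem~\ref{thm:polynomialnoether} with $A = R$ and $B = M$.
\end{proof}
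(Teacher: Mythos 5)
Your proof is correct and takes essentially the same route as the paper, which likewise obtains the result by specializing Theorem~\ref{thm:polynomialnoether} to $A=R$, $B=M$ via the dictionary of Remark~\ref{rem:Moduleproperties}. The only quibble is your parenthetical claim that $0\cdot m=0$ holds because $0$ is the additive identity of $R$ --- in a merely left distributive module this is not automatic (it would follow from \emph{right} distributivity) --- but the existence of a zero element is part of the standing assumption of Definition~\ref{defi:zeroelement} built into the definition of $M[x]$, so nothing breaks.
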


\begin{proof}
This follows from Theorem~\ref{thm:polynomialnoether}
and Theorem~\ref{thm:HilbertM}.
\end{proof}

\begin{cor}\label{cor:leftdistributive}
Suppose that $R$ is a left distributive
ring. Then $R[x]$ is left Noetherian if and only
if $R$ is left Noetherian and weakly left $s$-unital.
In particular, if $R$ is unital, then $R[x]$
is left Noetherian if and only if $R$ is 
left Noetherian.
\end{cor}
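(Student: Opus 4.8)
The plan is to derive Corollary~\ref{cor:leftdistributive} directly from Theorem~\ref{thm:leftdistributive} by specializing to the case $M = R$, viewing $R$ as a left module over itself. First I would invoke Theorem~\ref{thm:leftdistributive} with $M = R$: since $R$ is assumed left distributive as a ring, it is by definition left distributive as a left module over itself, so the hypotheses of the theorem are met. The theorem then tells us that $R[x]$ is Noetherian as a left $R[x]$-module if and only if $R$ is Noetherian and weakly $s$-unital as a left $R$-module. Translating through the definition of ring-theoretic properties (where a ring property is defined as the corresponding module property of $R$ over itself), ``$R[x]$ left Noetherian'' is exactly ``$R[x]$ Noetherian as a left $R[x]$-module,'' ``$R$ left Noetherian'' is ``$R$ Noetherian as a left $R$-module,'' and ``weakly left $s$-unital'' is ``weakly $s$-unital as a left module over itself.'' This gives the first sentence of the corollary immediately.

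For the ``in particular'' clause, I would observe that a unital ring is automatically weakly left $s$-unital: if $R$ has a left identity $1$, then for each $r \in R$ we have $r = 1r \in Rr \subseteq \sum_{n \in \N_+} \Z(R^n r)$, so the weakly $s$-unital condition holds trivially. Hence, under the unitality assumption, the condition ``$R$ is left Noetherian and weakly left $s$-unital'' collapses to just ``$R$ is left Noetherian,'' yielding the final equivalence. I would note in passing that this recovers the classical Hilbert basis theorem (Theorem~\ref{thm:Hilbertsbasistheorem}) in the associative unital case, now extended to rings that need not be associative or right distributive.

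I do not anticipate a genuine obstacle here, since the corollary is a direct specialization; the only care required is bookkeeping to confirm that the module-theoretic hypotheses and conclusions of Theorem~\ref{thm:leftdistributive} translate correctly into their ring-theoretic counterparts under the convention that a ring property means the corresponding property of $R$ as a module over itself. The one point meriting explicit verification is that left distributivity of $R$ as a ring indeed supplies the left distributivity of $R$ as a module over itself required by Theorem~\ref{thm:leftdistributive}, but this is immediate from the definitions. Thus the entire proof amounts to the two sentences: apply Theorem~\ref{thm:leftdistributive} with $M = R$, then note that unitality implies weak left $s$-unitality.
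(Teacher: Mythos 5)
Your proposal is correct and follows exactly the paper's route: the paper's entire proof is ``This follows from Theorem~\ref{thm:leftdistributive},'' i.e.\ the specialization $M = R$, with the unital case handled by the (correct) observation that a left identity makes $R$ weakly left $s$-unital. Your write-up simply makes explicit the bookkeeping that the paper leaves implicit.
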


\begin{proof}
This follows from Theorem~\ref{thm:leftdistributive}.
\end{proof}

\begin{ex}\label{ex:leftdistributive}
There are many examples of classes of left 
distributive rings upon which we can apply 
our results above, for instance
Corollary~\ref{cor:leftdistributive}.

\begin{enumerate}[{\rm (i)}]

\item All unital and Noetherian rings which are 
alternative or Jordan.

\item Dickson's \emph{left near-fields}
(see \cite[p. 254]{Pilz1983}) are 
\emph{finite} rings, hence Noetherian.
Note that left near-fields are not always
right distributive.

\item Algebras generated by the so-called 
\emph{Cayley--Dickson doubling procedure}. 
These algebras are finite-dimensional vector spaces, 
hence Noetherian.
Note that these algebras are both left and right
distributive for all dimensions.

\item Algebras generated by the 
so-called \emph{Conway--Smith doubling procedure}
(see \cite{Lundstrom2012}). These algebras
are finite-dimensional vector spaces, hence Noetherian.
Note that these algebras are left distributive, 
but not right distributive from dimension 16 and onwards.

\end{enumerate}
\end{ex}

\begin{ex}
Suppose that $\mathbb{F}_2$ denotes 
the field with two elements 0 and 1.
Let $G = \{ R,P,S \}$ denote the 
commutative non-associative 
\emph{rock, paper, scissors}
magma defined by the relations
$R^2 = R$, $P^2 = P$, $S^2 = S$,
$RP = P$, $RS = R$, $PS = S$.
It is easily checked that the magma algebra
$\mathbb{F}_2[G]$ is a Boolean ring. 
In particular, $\mathbb{F}_2[G]$ is weakly left $s$-unital. Since $\mathbb{F}_2[G]$ 
is finite, it is Noetherian.
Therefore, by Corollary~\ref{cor:leftdistributive},  the 
ring $\mathbb{F}_2[G][x]$ is left Noetherian.
However, note that
$\mathbb{F}_2[G]$ is not left unital. 
Indeed, seeking a contradiction, let $x \colonequals aR+bP+cS$ denote a left
identity for some $a,b,c \in \mathbb{F}_2$.
In particular, $x R = R$ which implies
that $aR + bP + cR = R$ so that
$b=0$ and $a+c = 1$. Thus, $x = R$ or $x=S$.
Case~1: $x = R$. From the equality
$xS = S$ we get $RS = S$ which is a
contradiction.
Case~2: $x = S$. From the equality 
$x P = P$ we get $SP = P$ which is a
contradiction.
\end{ex}

\begin{ex}
Sometimes Ore ring extensions 
produce rings which are left
Noetherian but not right Noetherian. Indeed,
suppose that $R$ is a finite unital associative ring.
Consider $R \times R$ with its usual
addition but with a new product defined
by $(r,s)(t,u) \colonequals  (rt,ru)$. 
It is easy to check that $R \times R$
is an associative ring.
Furthermore,
$R \times R$ is left unital having all elements of the form $(1,s)$ 
as left units. However, since
$(0,1)(t,u) = (0,0)$, the ring $R \times R$
is not right unital. 
There are many derivations on $R \times R$.
Indeed, fix 
$(v,w) \in R \times R$.
The induced adjoint derivation
$\delta_{(v,w)}\colon 
R \times R \to R \times R$ is defined by
\[
\delta_{(v,w)}(r,s) =
(v,w)(r,s) - (r,s)(v,w) =
(vr-rv,vs-rw) = 
\left( [v,r] ,  
\left|
\begin{array}{cc}
   v  & w \\
   r  & s
\end{array}
\right|
\right)
\]
for all $(r,s) \in R \times R$.
Since $R \times R$ is finite it follows that $R \times R$ is both
left and right Noetherian. From Theorem~\ref{thm:HilbertM} it thus follows that
the ring $(R \times R)
[x; \id_{R \times R} , \delta_{(v,w)}]$
is left Noetherian.
For every $n \geq 0$ put
$I_n \colonequals \sum_{i=0}^n 
(\{ 0 \} \times A) x^i$. Then each 
$I_n$ is a right ideal in the ring
$(R \times R)
[x; \id_{R \times R} , \delta_{(v,w)}]$.
But $I_0 \subsetneq I_1 
\subsetneq \cdots$ showing that the ring 
$(R \times R)
[x; \id_{R \times R} , \delta_{(v,w)}]$
is not right Noetherian.
\end{ex}

\end{document}